\newcommand{\R}{{\mathbb{R}}}
\newcommand{\E}{{\mathbb{E}}}
\newcommand{\N}{{\mathbb{N}}}
\newcommand{\F}{{\mathcal{F}}} 
\renewcommand{\P}{{\mathbb{P}}} 
\newcommand{\diff}[1]{\,\mathrm{d}#1}
\newcommand{\one}{\mathbb{I}}
\newcommand{\triple}{{\vert\kern-0.25ex\vert\kern-0.25ex\vert}}
\theoremstyle{plain}
\newtheorem{definition}{Definition}[section]
\newtheorem{theorem}[definition]{Theorem}
\newtheorem{lemma}[definition]{Lemma}
\newtheorem{prop}[definition]{Proposition}
\newtheorem{assumption}[definition]{Assumption}
\theoremstyle{definition}
\newtheorem{remark}[definition]{Remark}
\begin{document}

\title[Error analysis of randomized Runge-Kutta methods]
{Error analysis of randomized Runge-Kutta methods 
for differential equations with time-irregular coefficients}

\author[R.~Kruse]{Raphael Kruse}
\address{Raphael Kruse\\
Technische Universit\"at Berlin\\
Institut f\"ur Mathematik, Secr. MA 5-3\\
Stra\ss e des 17.~Juni 136\\
DE-10623 Berlin\\
Germany}
\email{kruse@math.tu-berlin.de}

\author[Y.~Wu]{Yue Wu}
\address{Yue Wu\\
Technische Universit\"at Berlin\\
Institut f\"ur Mathematik, Secr. MA 5-3\\
Stra\ss e des 17.~Juni 136\\
DE-10623 Berlin\\
Germany}
\email{wu@math.tu-berlin.de}

\keywords{randomized Riemann sum quadrature rule, randomized Euler method,
randomized Runge-Kutta method, ordinary differential equations with
time-irregular coefficients, Carath\'eodory differential equations, almost sure
convergence, $L^p$-convergence}  
\subjclass[2010]{65C05, 65L05, 65L06, 65L20} 

\begin{abstract}
  This paper contains an error analysis of two randomized explicit Runge-Kutta
  schemes for ordinary differential equations (ODEs) with time-irregular
  coefficient functions. In particular, the methods are applicable to ODEs of
  Carath\'eodory type, whose coefficient functions are only integrable with
  respect to the time variable but are not assumed to be continuous. A further
  field of application are ODEs with coefficient functions that contain weak
  singularities with respect to the time variable. 

  The main result consists of precise bounds for the discretization error with
  respect to the $L^p(\Omega;\R^d)$-norm. In addition, convergence rates are
  also derived in the almost sure sense. An important ingredient in the
  analysis are corresponding error bounds for the randomized Riemann sum
  quadrature rule. The theoretical results are illustrated through a few
  numerical experiments. 
\end{abstract}

\maketitle

\section{Introduction}
\label{sec:intro}

In this paper we investigate the numerical solution of ordinary differential
equations by randomized one-step methods. More precisely, let $T \in
(0, \infty)$ and let $u 
\colon [0,T] \to \R^d$, $d \in \N$, denote the exact solution to the initial
value problem  
\begin{align}
  \label{eq4:CDE}
  \begin{split}
    \begin{cases}
      \dot{u}(t) &= f(t,u(t)), \quad t \in [0,T],\\
      u(0) &= u_0,
    \end{cases}
  \end{split}
\end{align}
where $u_0 \in \R^d$ is the initial condition. Let us recall that the initial
value problem \eqref{eq4:CDE} is said to be of \emph{Carath\'eodory type} if
the measurable coefficient function $f \colon [0,T] \times \R^d \to \R^d$ is
(locally) integrable with respect to the temporal variable and continuous with
respect to the state variable. If $f$ is additionally assumed to fulfill a
(local) Lipschitz condition with respect to the state variable, then it is
well-known that the initial value problem \eqref{eq4:CDE} admits a unique
(local) solution $u$.
Recall that a
measurable mapping $u \colon [0,T] \to \R^d$ is called a (global)
\emph{solution} to \eqref{eq4:CDE} if $u$ is absolutely continuous and
satisfies  
\begin{align}
  \label{eq4:sol}
  u(t) = u_0 + \int_0^t f(s,u(s)) \diff{s}
\end{align}
for any $t \in [0,T]$. In particular, Equation~\eqref{eq4:sol} comes implicitly
with the assumption that the mapping $[0,T] \ni t \mapsto f(t,u(t)) \in \R^d$
is integrable.  For instance, we refer to \cite[Chap.~I,
Thm~5.3]{hale1980}. 

Our motivation for studying Carath\'eodory type initial value problems stems
from the fact that certain \emph{stochastic differential equations} 
\cite{mao2008, oksendal2003} or \emph{rough differential equations}
\cite{friz2014} that are driven by an additive noise can be transformed into a
problem of the form \eqref{eq4:CDE}. For instance, let $b \colon \R^d \to \R^d$
be Lipschitz continuous and let $v \colon [0,T] \to \R^d$ be the solution to a
rough differential equation of the form
\begin{align*}
  \diff{v(t)} = b(v(t)) \diff{t} + \diff{r(t)}, \quad v(0) = v_0,
\end{align*}
where $r \colon [0,T] \to \R^d$ is a non-smooth but integrable perturbation.
Then, the mapping $u \colon [0,T] \to \R^d$ given by $u(t) := v(t) - r(t)$, $t
\in [0,T]$, solves an ODE of the form \eqref{eq4:CDE} with $f(t,x) := b(r(t) +
x)$ for each $(t,x) \in [0,T] \times \R^d$. Depending on the smoothness of the
perturbation $r$, the resulting mapping $f$ is then  often only integrable or
H\"older continuous with exponent $\gamma \in (0,1]$ with respect to the
temporal variable $t$.

Due to the low regularity of the coefficient function $f$ the numerical
approximation of the solution $u$ to Carath\'eodory type differential equations
is a challenging task. Indeed, it can be shown that any deterministic numerical
one-step method is in general divergent if it only uses finitely many point
evaluations of $f$. This is easily seen from 
a simple adaptation of arguments presented in \cite[Kap.~2.3]{mueller2012}.
We discuss this aspect in more depth in Section~\ref{subsec:divergence} below.

If the coefficient function $f$ enjoys slightly more regularity with respect
to the temporal variable, say H\"older continuous with some exponent $\gamma
\in (0,1]$ (compare with Assumption~\ref{as5:f} further below),
then classical deterministic numerical algorithms such as 
Runge-Kutta methods or linear multi-step methods become applicable and will
converge to the exact solution. However, since $f$ is not
assumed to be differentiable we cannot expect high order of convergence from
these schemes. In fact, in \cite{kacewicz1987} it is shown that if $\gamma = 1$
then the minimum error of any deterministic method depending only on $N \in
\N$ point evaluations of the coefficient function $f$ is of order
$\mathcal{O}(N^{-1})$. Similarly, for arbitrary values of
$\gamma \in (0,1)$ the minimum error among all deterministic algorithms that
use at most $N \in \N$ point evaluations of $f$ decays only with order
$\mathcal{O}(N^{-\gamma})$, see \cite{heinrich2008}. Therefore, especially in
the case of small values for $\gamma$, deterministic methods may still be
considered as impracticable.

For these reasons it is necessary to extend the class of considered numerical
algorithms. For instance, the method could additionally make use of linear
functionals of the coefficient function $f$, say  
integrals, instead of mere point evaluations. However, it is often not clear
how to implement these 
methods if $f$ is not (piecewise) continuous. Here, we therefore follow a
different path that considers \emph{randomized numerical methods}. The
prototype of this class of numerical algorithms is the classical Monte Carlo
method, which converges already under an integrability condition.

In the literature, several randomized numerical methods have been
developed for the specific initial value problem \eqref{eq4:CDE} under a
variety of mild regularity assumptions. For instance, we mention the results in 
\cite{daun2011,heinrich2008,jentzen2009, kacewicz2006,stengle1990,stengle1995}
and the references therein. These randomized methods are usually found to be
superior over corresponding deterministic methods in the sense that the
resulting discretization error decays already with order
$\mathcal{O}(N^{-\gamma - \frac{1}{2}})$ under the \emph{same} smoothness
assumptions as sketched above. Let us also mention that a further application
of randomized methods to initial value problems in Banach spaces is found in
\cite{daun2014, heinrich2013}, while the approximation of stochastic
ODEs by a randomized Euler-Maruyama method is considered in
\cite{przybylowicz2014}. In \cite{coulibaly1999} a related family of
quasi-randomized methods is studied. 

In this paper we present a precise error analysis for two randomized
Runge-Kutta methods that are applicable to the numerical solution of ODEs with
time-irregular coefficient functions. The purpose is to prove convergence
of the two methods with an order of at least $\frac{1}{2}$ with respect to the
$L^p(\Omega;\R^d)$-norm under very mild conditions on the coefficient function
$f$. Hereby we relax several conditions on $f$
often found in the literature. In particular, we do not assume that the
coefficient function is (locally) bounded which allows to treat functions $f$
with a weak singularity of the form $|t - t_0|^{-\frac{1}{p} + \epsilon}$, $p
\in [2,\infty)$, $\epsilon \in (0,\infty)$, $t, t_0 \in [0,T]$. In addition,
we also estimate the order of convergence in the almost sure sense. The precise
conditions on the coefficient function are stated in Assumption~\ref{as4:f} and
Assumption~\ref{as5:f}.

We now introduce the two randomized Runge-Kutta methods in more detail. If the
reader is not familiar with standard notations and concepts in probability we
suggest to first consult Section~\ref{sec:notation}.  

Let $(\tau_j)_{j \in \N}$
be a sequence of independent and $\mathcal{U}(0,1)$-distributed random
variables on a probability space $(\Omega, \F, \P)$. 
Then, for any step size $h \in (0,1)$ we define $N_h \in \N$ to be the integer
determined by $N_h h \le T < (N_h + 1) h$. Set $t_j = j h$ for every $j \in
\N\cup\{0\}$. The first numerical approximation $(U^j)_{j \in \{0,\ldots,N_h\}}$
of $u$ considered in this paper is determined by setting $U^0 = u_0$ and by the recursion 
\begin{align}
  \label{eq5:NumMeth1}
  \begin{split}
    U^j &= U^{j-1} + h f(t_{j-1} + \tau_j h, U^{j-1}),
  \end{split}
\end{align}
for all $j \in \{1,\ldots, N_h\}$. This method is usually termed
\emph{randomized Euler method} and it is a particular case of a Runge-Kutta Monte
Carlo method studied in \cite{jentzen2009, stengle1990, stengle1995}. Let us
emphasize that, as it is customary for Monte Carlo methods, the result of the
numerical scheme is a discrete time stochastic process defined on the same
probability space as the random input $(\tau_j)_{j \in \N}$.

The second randomized Runge-Kutta method $(V^j)_{j
\in \{0,\ldots,N_h\}}$ is determined by setting $V^0 = u_0$ and by the
recursion 
\begin{align}
  \label{eq5:NumMeth2}
  \begin{split}
    V^j_\tau &= V^{j-1} + h \tau_j f(t_{j-1}, V^{j-1}),\\
    V^j &= V^{j-1} + h f(t_{j-1} + \tau_j h, V^j_\tau),
  \end{split}
\end{align}
for all $j \in \{1,\ldots, N_h\}$. This scheme is a member of a family of
methods that has been introduced in \cite{daun2011, heinrich2008}.  

The two methods \eqref{eq5:NumMeth1} and \eqref{eq5:NumMeth2}
can indeed be interpreted as \emph{randomized Runge-Kutta methods}. In fact, 
in the $j$-th step of \eqref{eq5:NumMeth1} and \eqref{eq5:NumMeth2} we randomly
choose one particular Runge-Kutta method from the families with Butcher tableaux
\begin{align}
  \label{eq5:randRK}
    &\begin{array}{c|c}
      \theta & 0 \\
      \hline
       & 1
    \end{array},
    && \text{ or }
    & \quad
    \begin{array}{c|cc}
      0 & 0 & 0 \\
      \theta & \theta & 0 \\
      \hline
      & 0 & 1
    \end{array},
\end{align}
respectively, where the value of the parameter $\theta \in [0,1]$ is determined
by the outcome of the random variable $\tau_j$. For more details on Runge-Kutta
methods and their Butcher-tableaux \cite{butcher1964} we refer to standard
references, for example \cite{butcher2008, grigorieff1972, hairer1993}.

The remainder of this paper is organized as follows. In
Section~\ref{sec:notation} we introduce our notation and recall some
prerequisites from probability that are needed later. In
Section~\ref{sec:randRie} we state and prove precise error estimates for
the \emph{randomized Riemann sum quadrature rule}, which are an important
ingredient in our error analysis for the randomized Runge-Kutta methods
\eqref{eq5:NumMeth1} and \eqref{eq5:NumMeth2}. Randomized quadrature rules are
well-known to the literature, see \cite{haber1966, haber1967}. 
However, this is apparently the first time they are
applied in the error analysis of randomized Runge-Kutta methods.

Section~\ref{sec:CDE} contains the first main result of this paper. Here we
prove that the randomized Euler method \eqref{eq5:NumMeth1} converges to the
exact solution of a Carath\'eodory type ODE \eqref{eq4:CDE} with order
$\frac{1}{2}$ with respect to the norm in $L^p(\Omega;\R^d)$. See
Assumption~\ref{as4:f} for a precise statement of the conditions on the
coefficient function $f$. In addition we also derive the order of convergence
in the almost sure sense, hereby generalizing results from \cite{jentzen2009}
to unbounded coefficient functions. Note that the computationally more expensive
method \eqref{eq5:NumMeth2} does not offer any additional advantages in terms
of convergence speed in case of possibly discontinuous coefficient functions.
We therefore omit an error analysis in this situation. 

In Section~\ref{sec:ODE} we then consider the classical ODE setting with
a H\"older continuous coefficient function $f$. We determine the order of
convergence of the two numerical methods in dependence of the H\"older exponent
$\gamma$ and with respect to the $L^p(\Omega;\R^d)$-norm. We see that the
randomized Runge-Kutta method 
\eqref{eq5:NumMeth2} is superior to the randomized Euler method
\eqref{eq5:NumMeth1} if $\gamma \in (\frac{1}{2},1]$. These results generalize
the error analysis from \cite{daun2011, heinrich2008} to the case $p >
2$. Since they are based on the $L^p$-convergence result, we believe that our
almost sure convergence rates are new to the literature as well. Lastly, 
we present several numerical experiments in the final section.

\subsection{Divergence of deterministic algorithms}
\label{subsec:divergence}

As announced in the introduction let us briefly follow a line of arguments from
\cite[Kap.~2.3]{mueller2012}. Our aim is to give a sketch of proof that all
deterministic algorithms that only use point evaluations of $f$ will in general
diverge if applied to Carath\'eodory type ODEs. 

To this end, let $T = 1$, $d = 1$, and $u_0 = 0$ and consider the
problem \eqref{eq4:CDE} with the coefficient function $f_1(t,x) \equiv 1$ for
all $t \in [0,T]$ and $x \in \R$. Clearly, in this case the exact solution
$u_1$ satisfies $u_1(1) = 1$. If we apply an arbitrary but fixed deterministic
algorithm for the approximation of $u_1(1)$ with $N \in \N$ evaluations of
$f_1$ it will return an approximation $U_N \in \R$. Let us assume that for the
computation of $U_N$ the deterministic algorithm evaluated the coefficient
function $f_1$ at the points $(t_i^N, x_i^N) \in [0,T] \times \R$, $i =
1,\ldots,N$, in the extended phase space. For each number $N \in \N$ define now
the set $B_N = \cup_{ i = 1}^N \{ t_i^N \} \subset [0,T]$ as well as $B :=
\cup_{n \in \N} B_n \subset [0,T]$.  

Then, we consider a further initial value problem with the same initial
condition $u_0$ but with the coefficient function $f_2(t,x) :=
\one_B(t)$ for all $(t,x) \in [0,T] \times \R$. Obviously, the mapping $f_2$ is
also measurable and bounded. Since $f_2$ does not depend on the state variable,
it also fits into the framework of Carath\'eodory type ODEs. In fact, the mapping
$f_2$ fulfills all conditions of Assumption~\ref{as4:f} further below.
Because the set $B$ has Lebesgue measure zero the exact solution $u_2$
satisfies $u_2(1) = 0$ in this case. However, if we now apply the same
deterministic algorithm as above, it cannot distinguish between $f_1$ and $f_2$
and it will return the same numerical approximation $U_N \in \R$. Since this is
true for any $N \in \N$ and since $u_1(1) = 1 \neq 0 = u_2(1)$ the
deterministic algorithm will not converge to the exact solution of at least one
of the problems.

\section{Preliminaries}
\label{sec:notation}

In this section we collect a few important results and
inequalities in particular from probability, which are needed later. But first
we fix some notation and terminology that is frequently used throughout this
paper. 

As usual we denote by $\N$ the set of all positive integers and $\N_0 = \N \cup
\{0\}$, while $\R$ denotes the set of all real numbers. By $| \cdot |$ we
denote the standard norm on the 
Euclidean space $\R^d$, $d \in \N$. Further, for every $\gamma \in (0,1]$ we
denote by $\mathcal{C}^{\gamma}([0,T]) := \mathcal{C}^{\gamma}([0,T];\R^d)$ the
set of all $\gamma$-H\"older continuous mappings $g \colon [0,T] \to \R^d$.
Note that the space $\mathcal{C}^{\gamma}([0,T])$ becomes a Banach space if
endowed with the H\"older norm 
\begin{align*}
  \| g \|_{\mathcal{C}^{\gamma}([0,T])} = \sup_{t \in [0,T]} | g(t) | 
  + \sup_{\substack{t,s \in [0,T]\\ t \neq s}} \frac{| g(t) -
  g(s)|}{|t-s|^\gamma}. 
\end{align*}
In particular, it then holds true that
\begin{align*}
  | g(t) - g(s) | \le \| g \|_{\mathcal{C}^{\gamma}([0,T])} |t-s|^\gamma, \quad
  \text{ for all } t,s \in [0,T].
\end{align*}
The next inequality is a
useful tool to bound the error of a numerical approximation. For a proof and 
more general variants see for instance Proposition 4.1 in \cite{emmrich1999}.
 
\begin{lemma}[Discrete Gronwall's inequality]
  \label{lem:Gronwall} Consider two nonnegative sequences $(u_n)_{n\in \N}$
  and $(w_n)_{n\in \N}$ which for some given $a \in [0,\infty)$ satisfy 
  $$u_n \leq a + \sum_{j=1}^{n-1} w_j u_j,\quad \text{ for all }  n \in \N,$$
  then for all $n \in \N$ it also holds true that
  $$u_n\leq a \exp\Big(\sum_{j=1}^{n-1} w_j \Big).$$
\end{lemma}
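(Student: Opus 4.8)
The plan is to avoid a direct (and cumbersome) strong induction that would produce nested exponential sums, and instead to introduce the sequence of right-hand sides as an auxiliary object that obeys a clean first-order recursion. Concretely, I would set
\begin{align*}
  S_n := a + \sum_{j=1}^{n-1} w_j u_j, \qquad n \in \N,
\end{align*}
so that the standing hypothesis reads simply $u_n \le S_n$ for every $n \in \N$. With the convention that the empty sum vanishes, one has $S_1 = a$, which will serve as the base of the iteration.

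The key step is to establish a one-step multiplicative estimate for the sequence $(S_n)_{n \in \N}$. Since $S_{n+1} = S_n + w_n u_n$, and since $w_n \ge 0$ together with the hypothesis $u_n \le S_n$ gives $w_n u_n \le w_n S_n$, I obtain the recursion
\begin{align*}
  S_{n+1} = S_n + w_n u_n \le (1 + w_n) S_n, \qquad n \in \N.
\end{align*}
Here the nonnegativity of all the sequences is used in an essential way to preserve the direction of the inequality. Applying the elementary bound $1 + x \le \ee^{x}$, valid for every $x \in \R$, then upgrades this to $S_{n+1} \le \ee^{w_n} S_n$.

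From this it suffices to telescope the product. Iterating the last inequality down to $S_1 = a$ yields
\begin{align*}
  S_n \le a \prod_{j=1}^{n-1} \ee^{w_j} = a \exp\Big( \sum_{j=1}^{n-1} w_j \Big),
\end{align*}
and the claim follows by combining this with $u_n \le S_n$. I expect the only real point requiring care to be the index bookkeeping: one must keep straight the shift between $S_n$ and $S_{n+1}$, and in particular verify the base case $n = 1$ (where the hypothesis degenerates to $u_1 \le a$ and the empty product equals $1$), so that the telescoping accumulates exactly the exponent $\sum_{j=1}^{n-1} w_j$ rather than an off-by-one variant. No genuine analytic obstacle arises; the whole argument rests on the single convexity inequality $1 + x \le \ee^x$ and on the preservation of inequalities under multiplication by the nonnegative weights $w_n$.
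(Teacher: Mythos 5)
Your argument is correct: the auxiliary sequence $S_n = a + \sum_{j=1}^{n-1} w_j u_j$ satisfies $S_1 = a$ and $S_{n+1} = S_n + w_n u_n \le (1+w_n)S_n \le \ee^{w_n} S_n$, and telescoping together with $u_n \le S_n$ gives exactly the asserted bound; the index bookkeeping you flag works out, since the base case $S_1 = a$ pairs with the empty product. The paper does not supply its own proof of this lemma but refers to Proposition~4.1 in the cited preprint of Emmrich, so there is nothing internal to compare against; your proof is the standard one and is complete as written.
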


For the introduction and the error analysis of Monte Carlo methods, we also
require some fundamental concepts from probability and stochastic analysis.
For a general introduction readers are referred to standard monographs
on this topic, for instance \cite{kallenberg2002, klenke2014, oksendal2003}. 
For the measure theoretical background see \cite{bauer2001, cohn2013}.

First let us recall that a \emph{probability space} $(\Omega,\mathcal{F},\P)$
consists of a measurable space $(\Omega,\mathcal{F})$ endowed with a finite
measure $\P$ satisfying $\P(\Omega) = 1$. The value $\P(A) \in [0,1]$ is
interpreted as the \emph{probability} of the \emph{event} $A \in \F$.
A mapping $X \colon \Omega \to \R^d$
is called a \emph{random variable} if $X$ is $\F /
\mathcal{B}(\R^d)$-measurable, where $\mathcal{B}(\R^d)$ denotes the
Borel-$\sigma$-algebra generated by the set of all open subsets of $\R^d$.
More precisely, it holds true that
\begin{align*}
  X^{-1}(B)= \big\{ \omega \in \Omega\, : \, X(\omega)\in B \big\} \in
  \mathcal{F} 
\end{align*}
for all $B \in \mathcal{B}(\R^d)$. Every random variable induces a probability
measure on its image space. In fact, the measure $\mu_X \colon
\mathcal{B}(\R^d) \to [0,1]$ given by $\mu_X(B)=\P(X^{-1}(B))$ for all
$B \in \mathcal{B}(\R^d)$ is a probability measure on the measurable space
$(\R^d, \mathcal{B}(\R^d))$. Usually, $\mu_X$ is called the \emph{distribution}
of $X$. 

In this paper, we frequently encounter a family of
$\mathcal{U}(a,b)$-distributed random variables $(\tau_j)_{j \in \N}$. This
means that for each $j \in \N$ the real-valued mapping $\tau_j \colon \Omega
\to \R$ is a random variable which is \emph{uniformly distributed} on the
interval $(a,b)$ with $a,b \in \R$, $a < b$. In particular, the distribution
$\mu_{\tau_j}$ of $\tau_j$ is given by $\mu_{\tau_j}(A) = \frac{1}{(b-a)}
\lambda\big( A \cap (a,b) \big)$, where $\lambda$ denotes the Lebesgue measure
on the real line. 

Next, let us recall that a random variable $X \colon \Omega \to \R^d$ is
called \emph{integrable} if $\int_{\Omega}|X(\omega)| \diff{\P(\omega)}<\infty$. 
Then, the \emph{expectation} of $X$ is defined as
$$\E[X]:=\int_{\Omega}X(\omega)\diff{\P(\omega)} = \int_{\R^d} x
\diff{\mu_{X}(x)}.$$ 
Moreover, we write $X \in L^p(\Omega;\R^d)$ with $p \in [1,\infty)$ if 
$\int_{\Omega}|X(\omega)|^p \diff{\P(\omega)}<\infty$. In addition, the set
$L^p(\Omega;\R^d)$ becomes a Banach space if we identify all random
variables which only differ on a set of measure zero (i.e. probability zero)
and if we endow $L^p(\Omega;\R^d)$ with the norm
\begin{align*}
  \| X \|_{L^p(\Omega;\R^d)} = \big( \E \big[ |X|^p \big]
  \big)^{\frac{1}{p}} = \Big( \int_{\Omega}|X(\omega)|^p \diff{\P(\omega)}
  \Big)^{\frac{1}{p}}.
\end{align*}
This definition coincides with the definition of the standard spaces
$L^p([0,T];\R^d)$ of $p$-fold Lebesgue-integrable measurable functions. If $X
\in L^p(\Omega;\R^d)$ for some $p \in [1,\infty)$ the \emph{Chebyshev
inequality} yields for all $\lambda \in (0,\infty)$ 
\begin{align}
  \label{eq:Cheb}
  \P \big( \{ \omega \in \Omega \, : \,  |X(\omega)| > \lambda \}
  \big) \le \| X \|^p_{L^p(\Omega;\R^d)} \lambda^{-p}.
\end{align}

Further, we say that a family of $\R^d$-valued random variables $(X_n)_{n \in \N}$ is 
\emph{independent} if for any finite subset $M \subset \N$ and for arbitrary events
$(A_m)_{m \in M} \subset \mathcal{B}(\R^d)$ we have the multiplication rule
\begin{align*}
  \P \Big( \bigcap_{m \in M} \{ \omega \in \Omega\, : \, X_m(\omega) \in A_m \}
  \Big) = \prod_{m \in M} \P \big( \{ \omega \in \Omega\, : \, X_m(\omega) \in
  A_m \} \big).
\end{align*}
On the level of the distributions of $(X_m)_{m \in \N}$ this basically means
that the joint distribution of each finite subfamily $(X_m)_{m \in M}$ is equal
to the product measure of the single distributions. From this we directly get
the multiplication rule for the expectation
\begin{align}
  \label{eq:prod_ind}
  \E\Big[ \prod_{m \in M} X_m \Big] = \prod_{m \in M} \E \big[ X_m \big],
\end{align}
provided $X_m$ is integrable for each $m \in M$.

If we interpret the index as a time parameter, we say that a 
family of $\R^d$-valued random variables $(X_m)_{m \in \N}$ is a discrete time
\emph{stochastic process}. A very important class of stochastic processes are
\emph{martingales}. Without stating a precise definition of martingales
it suffices for the understanding of this paper to be aware of the fact that
if $(X_m)_{m \in \N}$ is an independent family of integrable random variables
satisfying $\E [X_m] = 0$ for each $m \in \N$, then the stochastic process
defined by the partial sums  
\begin{align*}
  S_n := \sum_{m = 1}^n X_m, \quad n \in \N, 
\end{align*}
is a discrete time martingale. This enables us to apply
powerful inequalities for martingales, such as the following discrete time
version of the Burkholder–Davis–Gundy inequality, see \cite{burkholder1966}.

\begin{theorem}[Burkholder–Davis–Gundy inequality]
  \label{th:BDG}
  For each $p \in (1,\infty)$ there exist positive constants $c_p$ and $C_p$
  such that for every discrete time martingale $(X_n)_{n \in \N}$ and
  for every $n \in \N$ we have 
  $$c_p \| [X]_{n}^{{1}/{2}} \|_{L^p(\Omega;\R)}
  \leq \big\| \max_{j\in \{1,\ldots,n\} } |X_j| \big\|_{L^p(\Omega;\R)} 
  \leq C_p \big\| [X]_{n}^{{1}/{2}} \big\|_{L^p(\Omega;\R)},$$
  where $[X]_n = |X_1|^2 + \sum_{k=2}^{n} |X_{k}-X_{k-1}|^2$ denotes the \emph{quadratic
  variation} of $(X_n)_{n \in \N}$ up to $n$.
\end{theorem}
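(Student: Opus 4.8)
The statement is the classical result of Burkholder, Davis and Gundy, and the plan is to reproduce its proof via the \emph{good-$\lambda$ method}, which delivers both bounds at once with constants depending only on $p$. Write $d_1 = X_1$ and $d_k = X_k - X_{k-1}$ for $k \ge 2$, so that the square function is $Q_n := [X]_n^{1/2} = \big( \sum_{k=1}^n |d_k|^2 \big)^{1/2}$, and abbreviate the maximal function by $X_n^* := \max_{j \in \{1,\ldots,n\}} |X_j|$. The goal is then to compare the $L^p$-norms of $X_n^*$ and $Q_n$.

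First I would treat the base case $p = 2$, which is elementary and reused below. By the (conditional) orthogonality of the martingale increments $d_k$, one has the identity $\E[|X_n|^2] = \sum_{k=1}^n \E[|d_k|^2] = \E[[X]_n]$. Combined with Doob's $L^2$-maximal inequality $\E[(X_n^*)^2] \le 4\, \E[|X_n|^2]$ on the one side and the trivial bound $\E[|X_n|^2] \le \E[(X_n^*)^2]$ on the other, this already yields Theorem~\ref{th:BDG} for $p = 2$ with explicit constants.

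For general $p \in (1,\infty)$ the heart of the argument is a pair of \emph{good-$\lambda$ inequalities}: there exist $\beta > 1$ and a function $\delta \mapsto \varepsilon(\delta)$ with $\varepsilon(\delta) \to 0$ as $\delta \downarrow 0$ such that
\[
  \P\big( X_n^* > \beta \lambda,\ Q_n \le \delta \lambda \big)
  \le \varepsilon(\delta)\, \P\big( X_n^* > \lambda \big)
  \quad \text{for all } \lambda > 0,
\]
together with the symmetric estimate obtained by exchanging the roles of $X_n^*$ and $Q_n$. These are derived by a stopping-time construction: one stops the martingale when $|X_j|$ first exceeds $\lambda$, runs it until it first exceeds $\beta\lambda$, and applies the $L^2$-identity to the stopped martingale, exploiting that on the event $\{Q_n \le \delta\lambda\}$ the accumulated quadratic variation of the stopped increments is small. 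Afterwards I would integrate both inequalities against the measure $p \lambda^{p-1} \diff{\lambda}$ on $(0,\infty)$ and invoke the layer-cake formula $\E[(X_n^*)^p] = \int_0^\infty p\lambda^{p-1} \P(X_n^* > \lambda)\diff{\lambda}$; this turns the distributional bound into
\[
  \E\big[ (X_n^*)^p \big]
  \le \beta^p \varepsilon(\delta)\, \E\big[ (X_n^*)^p \big]
  + C(\beta,\delta)\, \E\big[ Q_n^p \big],
\]
and choosing $\delta$ so small that $\beta^p \varepsilon(\delta) < 1$ lets one absorb the first term into the left-hand side, yielding the upper bound. The symmetric good-$\lambda$ inequality produces the lower bound by the same absorption trick.

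The main obstacle is the good-$\lambda$ inequality itself. One cannot compare the events $\{X_n^* > \beta\lambda\}$ and $\{Q_n \le \delta\lambda\}$ pathwise; instead the comparison must be localised through the stopping times, and one has to quantify how unlikely it is for the martingale to grow from level $\lambda$ to level $\beta\lambda$ while its square function stays below $\delta\lambda$. This is exactly where the martingale structure and the finiteness of $[X]_n$ enter quantitatively, and verifying that $\varepsilon(\delta) \to 0$ with the correct uniformity in $n$ and $\lambda$ is the technical crux of the whole proof.
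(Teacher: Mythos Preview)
The paper does not prove Theorem~\ref{th:BDG} at all: it is quoted as a classical result with a reference to Burkholder's 1966 paper and then used as a black box in the proof of Theorem~\ref{th4:randRiemann}. So there is no ``paper's own proof'' to compare against.

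Your outline via the good-$\lambda$ method is a standard and, in broad strokes, correct route to the BDG inequalities. A few remarks are in order. First, the cited reference \cite{burkholder1966} actually predates the good-$\lambda$ technique; Burkholder's original argument there proceeds differently (via martingale transforms and interpolation-type ideas), and the distributional good-$\lambda$ approach is usually attributed to later work of Burkholder and Gundy. So your plan is not a reproduction of the cited proof but rather the now-canonical textbook argument. Second, your sketch is honest about where the real work lies, but as written it glosses over a genuine subtlety: in discrete time the increments $d_k$ can be large, so on the event $\{Q_n \le \delta\lambda\}$ one must also control the size of the single jump that pushes $|X_j|$ across the level $\lambda$, typically by introducing a third stopping time or by bounding the maximal increment $d^* = \max_k |d_k| \le Q_n$. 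Without this, the Chebyshev/$L^2$ estimate on the stopped martingale does not immediately yield an $\varepsilon(\delta)$ that tends to zero. Third, the absorption step at the end tacitly assumes $\E[(X_n^*)^p] < \infty$; for fixed $n$ this follows once one reduces to bounded increments (or truncates), but it should be mentioned. None of these are fatal --- they are exactly the standard refinements one finds in a full proof --- but your write-up would need to address them to be complete.
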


Another well-known lemma considers the limiting behaviour of sequences of sets
under probability measure (see Theorem 2.7 in \cite{klenke2014}).
\begin{lemma}[Borel-Cantelli Lemma]
  \label{lem:BC}
  If $A_1,A_2,\cdots\in \mathcal{F}$ and $\sum_{n=1}^{\infty}\P(A_n) < \infty$,
  then $\P(\limsup_{n \to \infty} A_n)=0$, where
  \begin{align*}
    \limsup_{n \to \infty} A_n = \bigcap_{n = 1}^\infty \bigcup_{i = n}^\infty
    A_i = \big\{ \omega \in \Omega \, : \, \omega \in A_i \text{ for infinitely
    many $i \in \N$ } \big\}.    
  \end{align*}
\end{lemma}

\section{Error estimates for randomized Riemann sums}
\label{sec:randRie}

In this section we give precise error estimates for a randomized Riemann sum
quadrature rule for integrals whose integrands have various degrees of
smoothness. Randomized quadrature rules have been first introduced in
\cite{haber1966, haber1967}. Usually, they consist of a randomized version
of classical deterministic quadrature rules and are known to offer advantages 
if the integrand is not smooth. However, in contrast to most Monte Carlo
methods, randomized quadrature rules still suffer from the curse of
dimensionality in the same way as their deterministic counter-parts. The main
field of application therefore is the numerical approximation of integrals with
a non-smooth integrand over a low-dimensional domain. See also
\cite[Sec.~6.4.5]{evans2000} or \cite[Sec.~5.5]{mueller2012} for further
details.

As in the introduction, for any step size $h \in (0,1)$ we define $N_h
\in \N$ to be the integer determined by $N_h h \le T < (N_h + 1) h$.
Let us recall that for every measurable function $g \colon [0,T]
\to \R^d$ with $\|g\|_{L^p([0,T];\R^d)} < \infty$ for some $p \in [2,\infty)$
the \emph{randomized Riemann sum approximation} $Q_{\tau,h}^n[g]$ of
$\int_0^{t_n} g(s) \diff{s}$ with step size $h \in (0,1)$ is given by 
\begin{align}
  \label{eq3:RandRie}
  Q_{\tau,h}^n[g] := h \sum_{j = 1}^n g(t_{j-1} + h \tau_j),\quad n \in
  \{1,\ldots,N_h\},
\end{align}
where $t_j = jh$ and $(\tau_j)_{j \in \N}$ is an independent family of
$\mathcal{U}(0,1)$-distributed random variables on a probability space
$(\Omega,\F,\P)$. 

The first theorem contains an error estimate with respect to
the $L^p(\Omega;\R^d)$-norm. Further below, we also study the almost sure
convergence of $Q_{\tau,h}^n[g]$.

\begin{theorem}[$L^p$-error estimate]
  \label{th4:randRiemann}
  Let $g \colon [0,T] \to \R^d$ be a measurable mapping satisfying $\| g
  \|_{L^p([0,T];\R^d)} < \infty$ for some $p \in [2, \infty)$. 
  Then, for every $h \in (0,1)$ and $n \in \{1,\ldots,N_h\}$
  the randomized Riemann sum $Q_{\tau,h}^n[g] \in
  L^p(\Omega;\R^d)$ is an unbiased estimator for the 
  integral $\int_0^{t_n} g(s) \diff{s}$, i.e.,
  $\E[Q_{\tau,h}^n[g]]=\int_0^{t_n} g(s) \diff{s}$. Further, for all $h \in
  (0,1)$ we have
  \begin{align}
    \label{eq4:errRie1}
    \Big\| \max_{n \in \{1,\ldots,N_h\}}
    \Big| \int_0^{t_n} g(s) \diff{s} - Q_{\tau,h}^n[g] \Big|
    \, \Big\|_{L^p(\Omega;\R)}
    \le 2 C_p T^{\frac{p-2}{2p}} \|g \|_{L^p([0,T];\R^d)} h^{\frac{1}{2}}.
  \end{align}
  In addition, if the mapping $g$ is $\gamma$-H\"older continuous for some
  $\gamma \in (0,1]$, then for all $h \in (0,1)$ we have  
  \begin{align}
    \label{eq4:errRie2}
    \Big\| \max_{n \in \{1,\ldots,N_h\}} \Big| \int_0^{t_n} g(s) \diff{s} -
    Q_{\tau,h}^n[g] \Big| \, \Big\|_{L^p(\Omega;\R)}
    \le C_p \sqrt{T} \| g \|_{\mathcal{C}^\gamma([0,T])} h^{\frac{1}{2} +
    \gamma}. 
  \end{align}
\end{theorem}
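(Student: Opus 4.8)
The plan is to reduce all three claims to a single martingale structure by introducing, for each $j \in \{1,\ldots,N_h\}$, the local quadrature error
\[
X_j := \int_{t_{j-1}}^{t_j} g(s) \diff{s} - h\, g(t_{j-1} + h\tau_j),
\]
so that $\int_0^{t_n} g(s)\diff{s} - Q_{\tau,h}^n[g] = \sum_{j=1}^n X_j$. The unbiasedness follows immediately from the change of variables $s = t_{j-1} + hr$ together with the fact that $\tau_j$ is $\mathcal{U}(0,1)$-distributed: one computes $\E[h\, g(t_{j-1}+h\tau_j)] = h\int_0^1 g(t_{j-1}+hr)\diff{r} = \int_{t_{j-1}}^{t_j} g(s)\diff{s}$, whence $\E[X_j] = 0$ and summation gives $\E[Q_{\tau,h}^n[g]] = \int_0^{t_n} g(s)\diff{s}$.

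Since each $X_j$ depends only on $\tau_j$, independence of the family $(\tau_j)_{j\in\N}$ makes $(X_j)$ an independent family of centered integrable random variables, so the partial sums $S_n = \sum_{j=1}^n X_j$ form a discrete-time martingale. The first key step is then to apply the Burkholder--Davis--Gundy inequality (Theorem~\ref{th:BDG}) to this martingale, which bounds the $L^p$-norm of the running maximum by the quadratic variation:
\[
\Big\| \max_{n\in\{1,\ldots,N_h\}} |S_n| \Big\|_{L^p(\Omega;\R)} \le C_p \Big\| \Big( \sum_{j=1}^{N_h} |X_j|^2 \Big)^{1/2} \Big\|_{L^p(\Omega;\R)}.
\]
The second key step is to pass from the quadratic variation to a sum of single $L^p$-norms via Minkowski's inequality in $L^{p/2}(\Omega;\R)$, which is legitimate since $p \ge 2$, giving $\big\|\big(\sum_j |X_j|^2\big)^{1/2}\big\|_{L^p(\Omega;\R)}^2 = \big\|\sum_j |X_j|^2\big\|_{L^{p/2}(\Omega;\R)} \le \sum_{j=1}^{N_h} \|X_j\|_{L^p(\Omega;\R^d)}^2$.

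It then remains only to estimate each term $\|X_j\|_{L^p(\Omega;\R^d)}$, and this is where the two estimates diverge. For \eqref{eq4:errRie1} I would center $X_j = \E[Y_j] - Y_j$ with $Y_j = h\, g(t_{j-1}+h\tau_j)$, so that $\|X_j\|_{L^p} \le 2\|Y_j\|_{L^p}$, and then evaluate $\|Y_j\|_{L^p(\Omega;\R^d)}^p = h^{p}\int_0^1 |g(t_{j-1}+hr)|^p \diff{r} = h^{p-1}\int_{t_{j-1}}^{t_j}|g(s)|^p\diff{s}$ again by the change of variables. Summing the squares and applying Hölder's inequality with the conjugate exponents $\tfrac{p}{2}$ and $\tfrac{p}{p-2}$ to the sum $\sum_j \big(\int_{t_{j-1}}^{t_j}|g|^p\big)^{2/p}$, using $\sum_j \int_{t_{j-1}}^{t_j}|g|^p \le \|g\|_{L^p([0,T];\R^d)}^p$ and $N_h \le T/h$, produces precisely the factor $h^{1/2} T^{(p-2)/(2p)}$ once the powers of $h$ are collected; this exponent bookkeeping is the one delicate point of the argument. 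For \eqref{eq4:errRie2} I would instead bound $X_j$ pathwise: writing $X_j = h\int_0^1 \big(g(t_{j-1}+hr)-g(t_{j-1}+h\tau_j)\big)\diff{r}$ and using $\gamma$-Hölder continuity together with $|r-\tau_j|\le 1$ gives the almost sure bound $|X_j| \le h^{1+\gamma}\|g\|_{\mathcal{C}^\gamma([0,T])}$, whence $\sum_j \|X_j\|_{L^p}^2 \le N_h\, h^{2+2\gamma}\|g\|_{\mathcal{C}^\gamma([0,T])}^2 \le T\, h^{1+2\gamma}\|g\|_{\mathcal{C}^\gamma([0,T])}^2$ and thus the rate $h^{1/2+\gamma}$. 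Combining either bound with the two displayed inequalities yields \eqref{eq4:errRie1} and \eqref{eq4:errRie2}.
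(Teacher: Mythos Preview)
Your proof is correct and follows essentially the same route as the paper: you set up the same martingale of local quadrature errors, apply Burkholder--Davis--Gundy followed by Minkowski's inequality in $L^{p/2}$, and then bound the individual summands via H\"older's inequality with exponents $\tfrac{p}{2},\tfrac{p}{p-2}$ for \eqref{eq4:errRie1} and via the pathwise H\"older estimate for \eqref{eq4:errRie2}. The only cosmetic difference is that for \eqref{eq4:errRie1} you bound $\|X_j\|_{L^p}\le 2\|Y_j\|_{L^p}$ in one stroke, whereas the paper splits the deterministic integral part and the random evaluation part by the triangle inequality and estimates them separately; both organizations yield the same constant $2C_p T^{(p-2)/(2p)}$.
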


\begin{proof}
  First, due to $\| g \|_{L^p([0,T];\R^d)} < \infty$ and $\tau_j \sim
  \mathcal{U}(0,1)$ it follows
  \begin{align}
    \label{eq4:est_g}
    h \big\| g(t_{j-1} + h \tau_j) \big\|_{L^p(\Omega;\R^d)}^p 
    = \int_{t_{j-1}}^{t_j} | g(s) |^p \diff{s} < \infty.
  \end{align}
  Hence, $Q_{\tau,h}^n[g] \in L^p(\Omega;\R^d)$. After taking the expected
  value of $Q_{\tau,h}^n[g]$ we get 
  \begin{align*}
    \E \big[ Q_{\tau,h}^n[g] \big] &= h \sum_{j = 1}^n \E\big[ g(t_{j-1} +
    h \tau_j) \big] = h  \sum_{j = 1}^n \int_0^1 g(t_{j-1} +
    s h) \diff{s} \\
    &= \sum_{j = 1}^n \int_{t_{j-1}}^{t_j} g(s) \diff{s} =
    \int_{0}^{t_n} g(s) \diff{s}. 
  \end{align*} 
  Thus, the randomized Riemann sum is an unbiased estimator for
  $\int_0^{t_n} g(s) \diff{s}$. Further, by linearity of the integral we obtain
  for the error
  \begin{align*}
    E^n := \int_0^{t_n} g(s) \diff{s} - Q_{\tau,h}^n[g] 
    &= \sum_{j = 1}^n \int_{t_{j-1}}^{t_j} \big( g(s) - g(t_{j-1} + h \tau_j)
    \big) \diff{s}. 
  \end{align*}
  Now, as above it follows that each summand is a centered random variable,
  that is  
  \begin{align*}
    \E \Big[ \int_{t_{j-1}}^{t_j} \big( g(s) - g(t_{j-1} + h \tau_j)
    \big) \diff{s} \Big] = 0
  \end{align*}
  for every $j \in \N$. Moreover, the summands are mutually independent
  due to the independence of $(\tau_j)_{j \in \N}$. In addition, we also obtain
  from \eqref{eq4:est_g} that
  \begin{align*}
    &\Big\| \int_{t_{j-1}}^{t_j} \big( g(s) - g(t_{j-1} + h
    \tau_j) \big) \diff{s} \Big\|_{L^p(\Omega;\R^d)}\\ 
    &\quad \le \int_{t_{j-1}}^{t_j} | g(s) | \diff{s} 
    + h \big\| g(t_{j-1} + h \tau_j) \big\|_{L^p(\Omega;\R^d)} < \infty.
  \end{align*}
  Therefore,
  $(E^n)_{n \in \{1,\ldots,N_h\}}$ is a discrete time
  $L^p$-martingale. Thus, we can apply the Burkholder-Davis-Gundy inequality
  from Theorem~\ref{th:BDG} and obtain  
  \begin{align*}
    \big\| \max_{n \in \{1,\ldots,N_h\}} | E^n| \big\|_{L^p(\Omega;\R)}
    \le C_p \big\| [E]^{\frac{1}{2}}_{N_h} \big\|_{L^p(\Omega;\R)}. 
  \end{align*}
  After inserting the quadratic variation $[E]_{N_h}$ we arrive at
  \begin{align}
    \label{eq4:est_E}
    \begin{split}
      &\big\| \max_{n \in \{1,\ldots,N_h\}} | E^n|
      \big\|_{L^p(\Omega;\R)}\\
      &\quad \le C_p \Big\| \Big( \sum_{j = 1}^{N_h}
      \Big| \int_{t_{j-1}}^{t_j} \big( g(s) -
      g(t_{j-1} +  h \tau_j) \big) \diff{s}  \Big|^2 \Big)^{\frac{1}{2}} 
      \Big\|_{L^p(\Omega;\R)}\\
      &\quad = C_p \Big\| \sum_{j = 1}^{N_h} \Big| \int_{t_{j-1}}^{t_j} \big(
      g(s) - g(t_{j-1} +  h \tau_j) \big) \diff{s} \Big|^2 \,
      \Big\|_{L^{\frac{p}{2}}(\Omega;\R)}^{\frac{1}{2}}\\
      &\quad \le C_p \Big( \sum_{j = 1}^{N_h} \Big\| \int_{t_{j-1}}^{t_j}
      \big( g(s) - g(t_{j-1} +  h \tau_j)
      \big) \diff{s}  \Big\|_{L^{p}(\Omega;\R^d)}^2 
      \Big)^{\frac{1}{2}}.
    \end{split}
  \end{align}
  Now, by an application of the triangle inequality we get
  \begin{align*}
    &\big\| \max_{n \in \{1,\ldots,N_h\}} | E^n|
    \big\|_{L^p(\Omega;\R)}\\
    &\quad \le C_p \Big( \sum_{j = 1}^{N_h} \Big| \int_{t_{j-1}}^{t_j}
    g(s) \diff{s}\Big|^2 \Big)^{\frac{1}{2}} + C_p 
    \Big( \sum_{j = 1}^{N_h} h^2 \big\|
    g(t_{j-1} +  h \tau_j) \big\|_{L^{p}(\Omega;\R^d)}^2
    \Big)^{\frac{1}{2}}.
  \end{align*}
  The first term is then bounded by
  \begin{align}
    \label{eq4:term1}
    \begin{split}
      \Big( \sum_{j = 1}^{N_h} \Big| \int_{t_{j-1}}^{t_j}
      g(s) \diff{s}\Big|^2 \Big)^{\frac{1}{2}} 
      &\le h^{\frac{1}{2}}
      \Big( \sum_{j = 1}^{N_h} \int_{t_{j-1}}^{t_j}
      \big| g(s) \big|^2 \diff{s} \Big)^{\frac{1}{2}} \le \| g
      \|_{L^2([0,T];\R^d)} h^{\frac{1}{2}}\\
      &\le T^{\frac{p-2}{2p}}
      \| g  \|_{L^p([0,T];\R^d)} h^{\frac{1}{2}},
    \end{split}
  \end{align}
  since $\| g \|_{L^2([0,T];\R^d)} \le 
  T^{\frac{p-2}{2p}} \| g \|_{L^p([0,T];\R^d)}$ by H\"older's inequality.

  If $p = 2$ we directly obtain the same bound for the second term by making
  use of \eqref{eq4:est_g}. If $p \in (2,\infty)$
  we first apply H\"older's inequality with
  exponents $\rho = \frac{p}{2} \in (1,\infty)$ and $\rho' =
  \frac{p}{p - 2} \in (1,\infty)$. This yields
  \begin{align}
    \label{eq4:term2}
    \begin{split}
      & \sum_{j = 1}^{N_h} h^{2 - \frac{2}{p}} \cdot h^{\frac{2}{p}} \big\|
      g(t_{j-1} +  h \tau_j) \big\|_{L^{p}(\Omega;\R^d)}^2 \\
      &\quad \le \Big( \sum_{j = 1}^{N_h} h^{\rho'(2 - \frac{2}{p})}
      \Big)^{\frac{1}{\rho'}} \Big( \sum_{j = 1}^{N_h} h  \big\|
      g(t_{j-1} +  h \tau_j) \big\|_{L^{p}(\Omega;\R^d)}^p
      \Big)^{\frac{2}{p}}\\
      &\quad \le T^{\frac{1}{\rho'}} h^{2( 1 - \frac{1}{p}) - \frac{1}{\rho'}}      
      \big\| g \big\|_{L^p([0,T];\R^d)}^2, 
    \end{split}
  \end{align}
  due to \eqref{eq4:est_g}. Altogether, since $T^{\frac{1}{2 \rho'}} =
  T^{\frac{p - 2}{2 p}}$ and after noting that $h^{2(
  1 - \frac{1}{p}) - \frac{1}{\rho'}} = h$ we derive from
  \eqref{eq4:est_E}, \eqref{eq4:term1} and \eqref{eq4:term2} that
  \begin{align*}
     &\big\| \max_{n \in \{1,\ldots,N_h\}} | E^n|
    \big\|_{L^p(\Omega;\R)} \le 2 C_p T^{\frac{p-2}{2p}} \| g
    \|_{L^p([0,T];\R^d)} h^{\frac{1}{2}}.
  \end{align*}
  This completes the proof of \eqref{eq4:errRie1}.

  Next, if in addition $g \in \mathcal{C}^\gamma([0,T])$, then
  we can improve the estimate in \eqref{eq4:est_E} by 
  \begin{align*}
    &\Big\| \int_{t_{j-1}}^{t_j} \big( g(s) - g(t_{j-1} +  h \tau_j)
    \big) \diff{s}  \Big\|_{L^{p}(\Omega;\R^d)}\\
    &\quad \le \int_{t_{j-1}}^{t_j} \big\| g(s) - g(t_{j-1} +  h \tau_j)
    \big\|_{L^{p}(\Omega;\R^d)} \diff{s}
    \le \| g \|_{\mathcal{C}^\gamma([0,T])} h^{1 + \gamma}.  
  \end{align*}
  Thus, inserting this into \eqref{eq4:est_E} gives
  \begin{align*}
    \big\| \max_{n \in \{0,1,\ldots,N_h\}} | E^n| \big\|_{L^p(\Omega;\R)}
    &\le C_p \Big( \sum_{j = 1}^{N_h} \| g \|_{\mathcal{C}^\gamma([0,T])}^2
    h^{2(1 + \gamma)} \Big)^{\frac{1}{2}}\\
    &\le C_p T^{\frac{1}{2}} \| g \|_{\mathcal{C}^\gamma([0,T])}
    h^{\frac{1}{2} + \gamma}.
  \end{align*}
  This completes the proof of \eqref{eq4:errRie2}.
\end{proof}

Error estimates with respect to the $L^p$-norm are sometimes unsatisfactory,
since they allow for the possibility that single realizations of the randomized
Riemann sum may differ significantly from its expected value. But in
practice often just one realization of the estimator is computed. To some
extent this is justified by the next theorem. This indicates that already on
the level of single ``typical'' realizations of $Q_{\tau,h}^n[g](\omega)$ we
observe convergence provided the step size $h$ is sufficiently small.
However, depending on the value of $p \in (2,\infty)$ the order of convergence
may be significantly reduced. 

\begin{theorem}[Almost sure convergence]
  \label{th3:Rie_pathwise}
  Let $g \colon [0,T] \to \R^d$ be a given measurable mapping with $\| g
  \|_{L^p([0,T];\R^d)} < \infty$ for some $p \in (2, \infty)$.   
  Let $(h_m)_{m \in \N} \subset (0,1)$ be an arbitrary sequence of step sizes
  with $\sum_{m = 1}^\infty h_m < \infty$. Then, there exist a nonnegative
  random variable $m_0 \colon \Omega \to \N_0$ and a measurable set
  $A \in \F$ with $\P(A) = 1$ such that for all $\omega \in A$ and $m \ge
  m_0(\omega)$ we have
  \begin{align}
    \label{eq4:errRie3}
    \max_{n \in \{0,1,\ldots,N_{h_m}\}} \Big| \int_0^{t_n} g(s) \diff{s} -
     Q_{\tau,h_m}^n[g](\omega) \Big| \le h_m^{\frac{1}{2} - \frac{1}{p}}.
  \end{align}
  Moreover, if in addition $g \in \mathcal{C}^\gamma([0,T])$ 
  for some $\gamma \in (0,1]$, then for every $\epsilon \in (0,\frac{1}{2})$
  there exist a nonnegative random variable $m_0^\epsilon \colon \Omega \to
  \N_0$ and a measurable set $A_\epsilon \in \F$ with $\P(A_\epsilon) = 1$ such
  that for all $\omega \in A_\epsilon$ and $m \ge m_0^\epsilon(\omega)$ we have
  \begin{align}
    \label{eq4:errRie4}
    \max_{n \in \{0,1,\ldots,N_{h_m}\}} \Big| \int_0^{t_n} g(s) \diff{s} -
    Q_{\tau,h_m}^n[g](\omega) \Big| \le h_m^{\frac{1}{2} + \gamma - \epsilon}.
  \end{align}
\end{theorem}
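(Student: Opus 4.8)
The plan is to upgrade the two $L^p$-error estimates of Theorem~\ref{th4:randRiemann} to almost sure statements by combining Chebyshev's inequality \eqref{eq:Cheb} with the Borel--Cantelli Lemma~\ref{lem:BC}. For each $m \in \N$ I would abbreviate the maximal error by
\begin{align*}
  X_m := \max_{n \in \{0,1,\ldots,N_{h_m}\}} \Big| \int_0^{t_n} g(s) \diff{s} - Q_{\tau,h_m}^n[g] \Big|,
\end{align*}
noting that the summand for $n = 0$ vanishes, so that this agrees with the maximum over $\{1,\ldots,N_{h_m}\}$ appearing in \eqref{eq4:errRie1}. The idea is to introduce the exceptional events $A_m := \{ \omega \in \Omega : X_m(\omega) > h_m^{1/2 - 1/p} \}$ and to show that their probabilities are summable.

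First I would estimate $\P(A_m)$ by applying Chebyshev's inequality \eqref{eq:Cheb} with exponent $p$, which yields $\P(A_m) \le \| X_m \|_{L^p(\Omega;\R)}^p \, h_m^{-(p/2 - 1)}$. Inserting the bound \eqref{eq4:errRie1}, which gives $\| X_m \|_{L^p(\Omega;\R)}^p \le K_p^p \, h_m^{p/2}$ for a constant $K_p$ independent of $m$, the exponents are arranged precisely so that $h_m^{p/2}$ cancels against $h_m^{-(p/2 - 1)}$ and leaves $\P(A_m) \le K_p^p \, h_m$. By the assumption $\sum_{m=1}^\infty h_m < \infty$ the series $\sum_{m=1}^\infty \P(A_m)$ converges, so Lemma~\ref{lem:BC} shows $\P(\limsup_{m\to\infty} A_m) = 0$. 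Setting $A := \Omega \setminus \limsup_{m\to\infty} A_m$ gives $\P(A) = 1$, and for each $\omega \in A$ the membership $\omega \in A_m$ holds for only finitely many $m$; defining $m_0(\omega) := 1 + \max\{ m \in \N : \omega \in A_m\}$ (with the convention $\max\emptyset = 0$, and $m_0 := 0$ off $A$) then yields \eqref{eq4:errRie3}.

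The H\"older case follows the same template, but the essential extra observation is that a $\gamma$-H\"older continuous function on the compact interval $[0,T]$ is bounded and hence satisfies $\| g \|_{L^q([0,T];\R^d)} < \infty$ for \emph{every} $q \in [2,\infty)$. This frees me to invoke the sharper estimate \eqref{eq4:errRie2}, whose rate $h_m^{1/2 + \gamma}$ is independent of the moment order, with an exponent $q$ that I may choose as large as I like. Given $\epsilon \in (0,\tfrac{1}{2})$ I would fix some $q \in [2,\infty)$ with $q \ge 1/\epsilon$ and set $A_m^\epsilon := \{ X_m > h_m^{1/2 + \gamma - \epsilon}\}$. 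Chebyshev with exponent $q$ together with \eqref{eq4:errRie2} then gives $\P(A_m^\epsilon) \le C_q^q \, T^{q/2} \| g \|_{\mathcal{C}^\gamma([0,T])}^q \, h_m^{q\epsilon} \le C \, h_m$, where the last step uses $q\epsilon \ge 1$ and $h_m < 1$. Summability and Lemma~\ref{lem:BC} conclude exactly as before, producing $m_0^\epsilon$ and $A_\epsilon$ with the property \eqref{eq4:errRie4}.

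I expect the only genuinely delicate point to be the exponent bookkeeping that pins down the admissible rates. The value $\tfrac{1}{2} - \tfrac{1}{p}$ in \eqref{eq4:errRie3} is forced by the requirement that $p(\tfrac{1}{2} - r) \ge 1$ for the Borel--Cantelli series to converge under the mere hypothesis $\sum_m h_m < \infty$, while in the H\"older case the arbitrarily small loss $\epsilon$ is bought precisely by the freedom to raise the moment order $q$. Everything else is a routine combination of the already established $L^p$-bounds with the two standard probabilistic lemmas.
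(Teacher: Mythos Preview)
Your proof is correct and follows essentially the same approach as the paper: the paper packages the Chebyshev--Borel--Cantelli step into a separate lemma (Lemma~\ref{lem3:pathwise}) and then applies it with $\rho = \tfrac{1}{2}$ and $\rho = \tfrac{1}{2}+\gamma$, while you carry out that argument inline, but the exponent bookkeeping and the choice of a large moment order $q$ in the H\"older case are identical.
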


For the proof we need the following result, which is a simple consequence of the
Borel-Cantelli lemma. It is a version of \cite[Lemma~2.1]{kloeden2007}, which in
turn is based on a technique developed in \cite{gyoengy1998}.

\begin{lemma}
  \label{lem3:pathwise}
  Let $p \in [1,\infty)$ and $\rho \in (\frac{1}{p}, \infty)$ be given.
  Consider an arbitrary sequence of step sizes $(h_m)_{m \in \N} \subset (0,1)$
  with $\sum_{m = 1}^\infty h_m < \infty$. Then, for every sequence $(X_m)_{m
  \in \N} \subset L^p(\Omega;\R^d)$ satisfying
  \begin{align*}
    \sup_{m \in \N} h_m^{-\rho} \| X_m \|_{L^p(\Omega;\R^d)} < \infty,
  \end{align*}
  there exist a nonnegative random variable $m_0 \colon \Omega \to \N_0$ and a 
  measurable set $A \in \F$ with $\P(A) = 1$ such that 
  for every $\omega \in A$ and $m \ge m_0(\omega)$ it holds true that
  \begin{align*}
    | X_m(\omega) | \le h_m^{\rho - \frac{1}{p}}.  
  \end{align*}
\end{lemma}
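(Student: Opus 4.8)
The statement is a textbook application of the Chebyshev inequality \eqref{eq:Cheb} followed by the Borel--Cantelli Lemma~\ref{lem:BC}, in the spirit of the cited references \cite{kloeden2007, gyoengy1998}. The plan is as follows.

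First I would set $K := \sup_{m \in \N} h_m^{-\rho} \| X_m \|_{L^p(\Omega;\R^d)}$, which is finite by hypothesis, so that $\| X_m \|_{L^p(\Omega;\R^d)} \le K h_m^{\rho}$ for every $m \in \N$. Since $\rho > \frac{1}{p}$, the exponent $\rho - \frac{1}{p}$ is strictly positive, whence the threshold $\lambda_m := h_m^{\rho - \frac{1}{p}} \in (0,1)$ is well-defined and positive for each $m$. I would then introduce the events
$$A_m := \big\{ \omega \in \Omega \, : \, |X_m(\omega)| > \lambda_m \big\} \in \F, \quad m \in \N.$$

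The key estimate comes from Chebyshev's inequality \eqref{eq:Cheb} applied with $\lambda = \lambda_m$: using the bound on $\| X_m \|_{L^p(\Omega;\R^d)}$ and the identity $\lambda_m^{-p} = h_m^{1 - p\rho}$, I obtain
$$\P(A_m) \le \| X_m \|_{L^p(\Omega;\R^d)}^p \, \lambda_m^{-p} \le K^p h_m^{p\rho} h_m^{1 - p\rho} = K^p h_m.$$
The whole purpose of the choice of exponent $\rho - \frac{1}{p}$ is precisely that the powers of $h_m$ cancel up to a single factor $h_m$. Consequently $\sum_{m=1}^\infty \P(A_m) \le K^p \sum_{m=1}^\infty h_m < \infty$ by the summability assumption on $(h_m)_{m \in \N}$, and the Borel--Cantelli Lemma~\ref{lem:BC} yields $\P(\limsup_{m \to \infty} A_m) = 0$.

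Finally I would set $A := \Omega \setminus \limsup_{m \to \infty} A_m$, so that $\P(A) = 1$, and observe that for each $\omega \in A$ the point $\omega$ lies in only finitely many of the $A_m$. Thus the quantity $m_0(\omega) := 1 + \max\{ m \in \N \, : \, \omega \in A_m \}$, with the convention $\max \emptyset = 0$, is finite on $A$; I would extend it by $m_0(\omega) := 0$ for $\omega \notin A$ to obtain a genuine map $m_0 \colon \Omega \to \N_0$, whose measurability follows from $A_m \in \F$. By construction, for every $\omega \in A$ and every $m \ge m_0(\omega)$ we have $\omega \notin A_m$, that is $|X_m(\omega)| \le \lambda_m = h_m^{\rho - \frac{1}{p}}$, which is the claimed bound. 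Since each step is an essentially routine application of standard tools, I do not expect any serious obstacle; the only points requiring mild care are the exponent bookkeeping in the Chebyshev estimate (so that the cancellation leaves exactly $h_m$ and summability can be invoked) and the verification that $m_0$ is a well-defined, measurable $\N_0$-valued random variable.
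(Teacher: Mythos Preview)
Your proof is correct and follows essentially the same route as the paper's: define the exceedance events $A_m$, bound $\P(A_m)$ via Chebyshev so that the powers of $h_m$ collapse to a single $h_m$, invoke summability and Borel--Cantelli, and take $A$ to be the complement of $\limsup A_m$. Your definition of $m_0$ with the $+1$ shift is in fact slightly cleaner than the paper's version, which as written has a harmless off-by-one issue at the last exceptional index.
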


\begin{proof}
  For each $m \in \N$ consider the event
  \begin{align*}
    A_m := \big\{ \omega \in \Omega \, : \, | X_m(\omega) | > h_m^{\rho -
    \frac{1}{p}} \big\} \in \F.
  \end{align*}
  Then, by the Chebyshev inequality \eqref{eq:Cheb} it holds true that
  \begin{align*}
    \P ( A_m ) &\le \| X_m \|^p_{L^p(\Omega;\R^d)} h_m^{1 - p \rho}    
  \end{align*}
  for all $m \in \N$. Consequently, 
  \begin{align*}
    \sum_{m = 1}^\infty \P(A_m) 
    \le \sum_{m = 1}^\infty \| X_m \|^p_{L^p(\Omega;\R^d)} h_m^{1 - p \rho} 
    \le \sup_{j \in \N} \Big(h_j^{-p \rho} \| X_j \|^p_{L^p(\Omega;\R^d)} \Big) 
    \sum_{m = 1}^\infty h_m < \infty
  \end{align*}
  due to our assumptions on $(X_m)_{m \in \N}$ and $(h_m)_{m \in \N}$. Thus, the
  Borel-Cantelli lemma (see Lemma~\ref{lem:BC}) yields
  $\P ( \limsup_{m \to \infty} A_m ) = 0$.
  Since
  \begin{align*}
    \limsup_{m \to \infty} A_m 
    &= \big\{ \omega \in \Omega \, : \, | X_m(\omega) | > h_m^{\rho -
    \frac{1}{p}} \text{ for infinitely many } m \in \N \big\}
  \end{align*}
  the assertion follows with $A \in \F$ being the complement of $\limsup_{m \to
  \infty} A_m \in \F$. Finally, the random variable $m_0 \colon \Omega
  \to \N_0$ is defined by
  \begin{align*}
    m_0(\omega) := \max\big( \{1\} \cup \big\{ m \in \N \, : \, |X_m(\omega)| >
    h_m^{\rho-\frac{1}{p}} \big\} \big) < \infty, \quad \text{ for all } \omega
    \in A,
  \end{align*}
  and $m_0(\omega) = 0$ for all $\omega \in \Omega \setminus A$.
\end{proof}

\begin{remark}
  The result of Lemma~\ref{lem3:pathwise} can equivalently be reformulated as
  follows: Under the same assumptions there exist a measurable set $A \in \F$
  with $\P(A) = 1$ and a nonnegative random variable $M \in L^p(\Omega;\R)$ such that
  for all $m \in \N$ and $\omega \in A$ we have
  \begin{align*}
    |X_m(\omega)| \le M(\omega) h_{m}^{\rho - \frac{1}{p}}.
  \end{align*}
  Hence, it is possible to relax the $\omega$-dependent step size restriction in
  Lemma~\ref{lem3:pathwise} in form of the random variable $m_0$ on the cost
  of introducing an $\omega$-dependent error constant $M(\omega)$.
  For further details we refer to the proof of \cite[Lemma~2.1]{kloeden2007}. 
\end{remark}

The proof of Theorem~\ref{th3:Rie_pathwise} is now a simple consequence of
Theorem~\ref{th4:randRiemann} and Lemma~\ref{lem3:pathwise}:

\begin{proof}[Proof of Theorem~\ref{th3:Rie_pathwise}]
  First, we assume that $g \in L^p([0,T];\R^d)$ for some $p \in (2,\infty)$.
  Let $(h_m)_{m \in \N}$ be an arbitrary sequence of step sizes with
  $\sum_{m = 1}^\infty h_m < \infty$. Then define 
  \begin{align*}
    X_m := \max_{n \in \{1,\ldots,N_{h_m}\}} \Big| \int_0^{t_n} g(s) \diff{s} -
    Q_{\tau,h_m}^n[g] \Big| 
  \end{align*}
  Clearly, $X_m \in L^p(\Omega;\R)$ for each $m \in \N$. In particular, from
  \eqref{eq4:errRie1} it follows that
  \begin{align*}
    \| X_m \|_{L^p(\Omega;\R)} \le 2 C_p T^{\frac{p-2}{2p}} \| g
    \|_{L^p(\Omega;\R)} h_m^{\frac{1}{2}}.    
  \end{align*}
  Thus, since $p > 2$ the conditions of Lemma~\ref{lem3:pathwise} are
  fulfilled with $\rho = \frac{1}{2}$ and assertion \eqref{eq4:errRie3}
  follows directly. 
  
  Next, if we additionally assume that $g \in \mathcal{C}^\gamma([0,T])$ for
  some $\gamma \in (0,1]$ then we immediately have $g \in L^p([0,T];\R^d)$ for
  every $p \in [2,\infty)$. Let $\epsilon \in (0,\frac{1}{2})$ be arbitrary.
  Choose a value for $p \in (2,\infty)$ such that $\frac{1}{p} < \epsilon$.
  Then, if we define $X_m$ as above we obtain from \eqref{eq4:errRie2} that
  \begin{align*}
    \| X_m \|_{L^p(\Omega;\R)} \le C_p \sqrt{T} \| g
    \|_{\mathcal{C}^\gamma([0,T])} h_m^{\frac{1}{2}+\gamma}
  \end{align*}
  for every $m \in \N$. Thus, a further application of
  Lemma~\ref{lem3:pathwise} with $\rho = \frac{1}{2}+\gamma$ yields
  \begin{align*}
    | X_m(\omega) | \le h_m^{\frac{1}{2}+\gamma- \frac{1}{p}} \le
    h_m^{\frac{1}{2}+\gamma- \epsilon} 
  \end{align*}
  for all $m \ge m_0^\epsilon(\omega)$ with probability one.
\end{proof}

\section{Numerical approximation of Carath\'eodory ODEs}
\label{sec:CDE}

In this section we investigate the numerical approximation of the exact
solution $u$ to the \emph{Carath\'eodory type} ordinary differential equation
\eqref{eq4:CDE}. In particular, we derive the order of convergence of the
randomized Euler method \eqref{eq5:NumMeth1} with respect to the norm in
$L^p(\Omega;\R^d)$. We also state the order of convergence in the almost sure
sense. Throughout this section, we shall allow the following assumptions on the
coefficient function $f$. 

\begin{assumption}
  \label{as4:f}
  The coefficient function $f \colon [0,T] \times \R^d \to
  \R^d$ is assumed to be measurable. Further, there exist $p \in [1,\infty]$
  and a measurable mapping $L \colon [0,T] \to [0,\infty)$ with 
  $\|L \|_{L^p([0,T];\R)} < \infty$ such that
  \begin{align}
    \label{eq4:Lip}
    | f(t,x_1) - f(t, x_2) | &\le L(t) | x_1 - x_2|
  \end{align}
  for almost all $t \in [0,T]$ and $x_1, x_2 \in \R^d$. In addition, there is a
  measurable mapping $K \colon [0,T] \to [0,\infty)$ with 
  $\|K \|_{L^p([0,T];\R)} < \infty$ such that
  \begin{align}
    \label{eq4:growth}
    | f(t,0) | \le K(t) 
  \end{align}
  for almost all $t \in [0,T]$.
\end{assumption}

Let us stress, that the mapping $f$ is not necessarily continuous with respect
to the temporal variable $t$. In addition, the mappings $L$ and $K$ are not
assumed to be bounded, in contrast to other results found in the literature
\cite{coulibaly1999,jentzen2009, stengle1990, stengle1995}. Moreover, from
\eqref{eq4:Lip} and \eqref{eq4:growth} we directly deduce the linear growth
condition 
\begin{align}
  \label{eq4:growth2}
  | f(t,x) | \le \overline{K}(t) ( 1 + |x| )
\end{align}
for almost all $t \in [0,T]$ and $x \in \R^d$. Here, $\overline{K} \colon [0,T] \to
[0,\infty)$ is the $L^p$-integrable mapping determined by $\overline{K}(t) :=
\max( K(t), L(t) )$, $t \in [0,T]$. Assumption~\ref{as4:f} is more than
sufficient to ensure the existence of a unique solution $u$ to the initial
value problem \eqref{eq4:CDE}, see \cite[Chap.~I, Thm~5.3]{hale1980}.

In the following proposition we collect a few properties of the solution $u$ to
\eqref{eq4:CDE}.

\begin{prop}
  \label{prop4:CDE}
  Let Assumption~\ref{as4:f} be fulfilled with $p \in [1,\infty]$. Then, the
  solution $u$ to the initial value problem \eqref{eq4:CDE} satisfies 
  \begin{align}
    \label{eq4:apriori}
    |u(t)| \le \Big( |u_0| + \int_0^T \overline{K}(s) \diff{s} \Big) \exp\Big(
    \int_0^t \overline{K}(s) \diff{s} \Big), \quad t \in [0,T].
  \end{align}
  Moreover, if $p \in (1,\infty]$ then for any $0 \le s \le t \le T$ it holds
  true that
  \begin{align}
    \label{eq4:cont}
    | u(t) - u(s) | \le \| \overline{K} \|_{L^p([0,T];\R)} \Big( 1 + \sup_{z
    \in [0,T]} |u(z)| \Big) |t - s|^{1 - \frac{1}{p}}.
  \end{align}
  In particular, $u$ is H\"older continuous with exponent $(1 - \frac{1}{p}) >
  0$.
\end{prop}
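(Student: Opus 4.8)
The plan is to derive both estimates directly from the integral representation \eqref{eq4:sol} of the solution together with the linear growth bound \eqref{eq4:growth2}, using Gronwall's inequality for the a priori bound and Hölder's inequality for the modulus of continuity. In both cases the integrability of the map $r \mapsto f(r,u(r))$, which is built into the definition of a solution in \eqref{eq4:sol}, guarantees that all integrals below are well defined and finite.

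For the a priori estimate \eqref{eq4:apriori} I would start from \eqref{eq4:sol}, take norms, and insert \eqref{eq4:growth2} to obtain
$$|u(t)| \le |u_0| + \int_0^t \overline{K}(s) \diff{s} + \int_0^t \overline{K}(s)|u(s)| \diff{s}.$$
Writing $\phi(t) = |u(t)|$ and observing that $t \mapsto |u_0| + \int_0^t \overline{K}(s) \diff{s}$ is nondecreasing (since $\overline{K} \ge 0$), this is precisely the hypothesis of the integral (continuous) form of Gronwall's inequality, one of the more general variants covered by \cite{emmrich1999}. Applying it with multiplier $\overline{K}$ yields $|u(t)| \le (|u_0| + \int_0^t \overline{K}(s)\diff{s})\exp(\int_0^t \overline{K}(s)\diff{s})$, and bounding the prefactor by replacing $\int_0^t$ with $\int_0^T$ gives \eqref{eq4:apriori}. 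In particular, \eqref{eq4:apriori} shows that $\sup_{z \in [0,T]}|u(z)| < \infty$, a fact needed in the second part.

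For the modulus of continuity \eqref{eq4:cont} I would again use \eqref{eq4:sol} to write $u(t)-u(s) = \int_s^t f(r,u(r)) \diff{r}$ for $0 \le s \le t \le T$, take norms, and apply \eqref{eq4:growth2} to bound the right-hand side by $(1 + \sup_{z \in [0,T]}|u(z)|)\int_s^t \overline{K}(r)\diff{r}$. The key step is then Hölder's inequality applied to $\int_s^t \overline{K}(r) \cdot 1 \diff{r}$ with exponents $p$ and $p' = p/(p-1)$, which produces the factor $\|\overline{K}\|_{L^p([0,T];\R)}|t-s|^{1-\frac1p}$. Combining the two bounds yields \eqref{eq4:cont}.

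The proof is essentially routine; the only genuine points of care are the restriction $p \in (1,\infty]$ in the second part --- needed so that the conjugate exponent $p'$ is finite and the H\"older power $1-\frac1p$ is strictly positive, with the case $p=\infty$ handled by the convention $\frac1p=0$ (which reduces \eqref{eq4:cont} to the Lipschitz estimate) --- and the appeal to the continuous rather than the discrete version of Gronwall's inequality for \eqref{eq4:apriori}.
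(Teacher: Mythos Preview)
Your proof is correct and follows essentially the same route as the paper: integral representation plus linear growth bound, then continuous Gronwall for \eqref{eq4:apriori} and H\"older's inequality for \eqref{eq4:cont}. The only cosmetic differences are that the paper bounds $\int_0^t \overline{K}(s)\diff{s}$ by $\int_0^T \overline{K}(s)\diff{s}$ \emph{before} applying Gronwall (so that the free term is constant) and cites \cite[Chap.~I, Cor.~6.6]{hale1980} rather than \cite{emmrich1999} for the continuous Gronwall inequality.
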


\begin{proof}
  Let $u$ be the solution to \eqref{eq4:CDE}. Then, from \eqref{eq4:sol}
  and \eqref{eq4:growth2} we get that
  \begin{align*}
    | u(t) | &\le |u_0| + \int_0^t | f(s, u(s) ) | \diff{s} \\
    &\le |u_0| + \int_0^t \overline{K}(s) ( 1 + |u(s)| ) \diff{s}\\
    &\le |u_0| + \int_0^T \overline{K}(s) \diff{s} + 
    \int_0^t \overline{K}(s) |u(s)| \diff{s}.
  \end{align*}
  Then, an application of Gronwall's inequality (see e.g. \cite[Chap.~I,
  Cor.~6.6]{hale1980}) yields the assertion \eqref{eq4:apriori}.

  Next, assume that $p \in (1,\infty)$ and let $0 \le s \le t \le T$ be
  arbitrary. Then, from \eqref{eq4:sol} and \eqref{eq4:growth2} we further
  deduce that 
  \begin{align*}
    | u(t) - u(s) | &\le \int_s^t |f(z, u(z)) | \diff{z} \le \int_s^t
    \overline{K}(z) ( 1 + |u(z)| ) \diff{z}. 
  \end{align*}
  Since $u$ is bounded and since the mapping  $\overline{K}$ is $p$-fold
  integrable we obtain from the H\"older inequality with exponents $p$ and $p'
  = \frac{p}{p-1} \in (1,\infty)$ that 
  \begin{align*}
    | u(t) - u(s) | &\le \int_0^T \one_{[s,t]}(z) \overline{K}(z) ( 1 + |u(z)| )
    \diff{z}\\
    &\le \Big( 1 + \sup_{z \in [0,T]} |u(z)| \Big) \Big( \int_0^T
    \overline{K}(z)^p \diff{z} \Big)^{\frac{1}{p}} |t - s|^{\frac{1}{p'}}
  \end{align*}
  Due to $\frac{1}{p'} = 1 - \frac{1}{p}$ this proves the asserted H\"older
  continuity of $u$ if $p \in (1,\infty)$. The case $p = \infty$ is treated
  similarly.
\end{proof}

Now we are well prepared to state the main result of this section.
The following theorem provides an error estimate of the randomized Euler method
\eqref{eq5:NumMeth1} under Assumption~\ref{as4:f} with respect to the norm in
$L^p(\Omega;\R^d)$. We give an explicit expression for the error
constant further below.

\begin{theorem}[$L^p$-error estimate]
  \label{th4:error1}
  Let Assumption~\ref{as4:f} be fulfilled with $p \in [2,\infty)$. Let $u$
  denote the exact solution to \eqref{eq4:CDE}. For given $h \in (0,1)$ let
  $(U^j)_{j \in \{0,1,\ldots,N_h\}}$ denote the numerical approximation
  determined by \eqref{eq5:NumMeth1} with initial condition $U^0 = u_0$.
  Then, there exists $C \in (0,\infty)$, independent of $h \in (0,1)$, such
  that 
  \begin{align*}
    \big\| \max_{n \in \{0,1,\ldots,N_h\}} | u(t_n) - U^n|
    \big\|_{L^p(\Omega;\R)} \le C h^{\frac{1}{2}}.
  \end{align*}
\end{theorem}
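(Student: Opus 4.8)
The plan is to control the error $e^n := u(t_n) - U^n$ through a standard one-step-method decomposition, separating the contribution of the randomized quadrature rule (which Theorem~\ref{th4:randRiemann} handles) from the propagation of the error due to the Lipschitz nonlinearity, and then to close the estimate with the discrete Gronwall inequality (Lemma~\ref{lem:Gronwall}) after passing to the maximum via the martingale structure of the quadrature error.

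First I would write the exact solution over one step using \eqref{eq4:sol} as $u(t_n) = u(t_{n-1}) + \int_{t_{n-1}}^{t_n} f(s,u(s))\diff{s}$, and subtract the scheme \eqref{eq5:NumMeth1}, which reads $U^n = U^{n-1} + h f(t_{n-1}+\tau_n h, U^{n-1})$. Summing over the steps yields
\begin{align*}
  u(t_n) - U^n = \int_0^{t_n} f(s,u(s))\diff{s} - h\sum_{j=1}^n f(t_{j-1}+\tau_j h, U^{j-1}).
\end{align*}
The natural idea is to insert and subtract the randomized Riemann sum of the \emph{exact} integrand, $Q_{\tau,h}^n[g]$ with $g(s) := f(s,u(s))$. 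This splits the error into a \emph{quadrature error} $\int_0^{t_n} g(s)\diff{s} - Q_{\tau,h}^n[g]$ and a \emph{consistency/stability error} $h\sum_{j=1}^n \big( f(t_{j-1}+\tau_j h, u(t_{j-1})) - f(t_{j-1}+\tau_j h, U^{j-1}) \big)$, where the former is the $L^p$-quantity already bounded by $2C_p T^{(p-2)/2p}\|g\|_{L^p}h^{1/2}$ in \eqref{eq4:errRie1}. Here one must verify $\|g\|_{L^p([0,T];\R^d)} < \infty$, which follows from the linear growth \eqref{eq4:growth2} together with the a priori bound \eqref{eq4:apriori} on $\sup_t|u(t)|$; one might also need to compare the evaluation point $t_{j-1}+\tau_j h$ of the exact integrand against $u$ evaluated at that point versus at $t_{j-1}$, which is where a discrepancy of the form $u(t_{j-1}+\tau_j h) - u(t_{j-1})$ must be estimated using the Hölder continuity \eqref{eq4:cont} of $u$ (valid since $p>1$), contributing a further $h^{1-1/p}$ term that is dominated by $h^{1/2}$ when $p\ge 2$.

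The delicate point is the order of operations with respect to the maximum over $n$ and the $L^p$-norm. Since the bound we want is on $\|\max_n |e^n|\|_{L^p}$, I would take absolute values inside the sum for the Lipschitz term, using \eqref{eq4:Lip} to get $h\sum_{j=1}^n L(t_{j-1}+\tau_j h)\,|e^{j-1}|$, and bound the quadrature term by its own maximum over $n$ — which is exactly the quantity \eqref{eq4:errRie1} estimates. The resulting pathwise inequality
\begin{align*}
  \max_{n\le N}|e^n| \le \max_{n\le N}\big| \textstyle\int_0^{t_n} g\diff{s} - Q_{\tau,h}^n[g] \big| + h\sum_{j=1}^{N} L(t_{j-1}+\tau_j h)\,\max_{k\le j-1}|e^k|
\end{align*}
has the form required by Lemma~\ref{lem:Gronwall}, with $w_j := h L(t_{j-1}+\tau_j h)$ and $a$ being the quadrature maximum. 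Applying discrete Gronwall gives a pathwise factor $\exp\big(h\sum_{j} L(t_{j-1}+\tau_j h)\big)$.

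The main obstacle, and the step requiring the most care, is that this Gronwall factor is itself random and must be controlled in $L^p$ \emph{uniformly in $h$} before I can take the $L^p$-norm of the whole inequality. The resolution is to note that $\E\big[\exp(c\,h\sum_j L(t_{j-1}+\tau_j h))\big]$ can be handled by independence of the $\tau_j$'s and the multiplication rule \eqref{eq:prod_ind}, since $h\sum_j L(t_{j-1}+\tau_j h)$ is a randomized Riemann sum converging to $\int_0^T L(s)\diff{s} < \infty$; its exponential moments are bounded uniformly in $h$. After separating the random Gronwall factor from the quadrature error (via Hölder's inequality in $\Omega$, if exact independence fails), taking the $L^p(\Omega)$-norm and invoking \eqref{eq4:errRie1} produces the desired $C h^{1/2}$ with an explicit constant $C$ built from $C_p$, $T$, $\|\overline K\|_{L^p}$, $\|L\|_{L^1}$-type exponential moments, and the a priori bound \eqref{eq4:apriori}. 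I expect verifying the uniform-in-$h$ boundedness of these exponential moments to be the technically heaviest part of the argument.
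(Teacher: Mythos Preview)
Your decomposition into a quadrature error, a temporal regularity term for $u(\theta_j)-u(t_{j-1})$, and a Lipschitz stability term is exactly what the paper does, and your observation that the middle term is of order $h^{1-1/p}\ge h^{1/2}$ for $p\ge 2$ is correct. The gap is in the final stage. Applying the discrete Gronwall lemma \emph{pathwise} produces the random factor $\exp\big(h\sum_{j} L(\theta_j)\big)$, and to take the $L^p(\Omega)$-norm afterwards you need this factor in some $L^q(\Omega)$ (via H\"older, since it is not independent of the quadrature error), uniformly in $h$. But Assumption~\ref{as4:f} only asks for $L\in L^p([0,T];\R)$, and the paper explicitly emphasises that $L$ need not be bounded. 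For a concrete obstruction take $L(s)=|s-t_0|^{-\alpha}$ with $\alpha\in(0,\tfrac{1}{p})$; this satisfies Assumption~\ref{as4:f}, yet for the index $j$ with $t_0\in(t_{j-1},t_j)$ one has
\begin{align*}
  \E\big[\exp\big(c\,hL(\theta_j)\big)\big]
  = \frac{1}{h}\int_{t_{j-1}}^{t_j}\exp\big(c\,h|s-t_0|^{-\alpha}\big)\diff{s}
  = \infty
\end{align*}
for every $c,h>0$. So the exponential moments you propose to bound simply do not exist in general, and the argument cannot close as written.

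The paper's remedy is to reverse the order of operations: raise to the $p$-th power and take the $L^p(\Omega)$-norm \emph{before} invoking Lemma~\ref{lem:Gronwall}. The crucial observation is that $L(\theta_j)$ is independent of $\max_{i\le j-1}|u(t_i)-U^i|$, the latter being measurable with respect to $(\tau_1,\dots,\tau_{j-1})$. Hence \eqref{eq:prod_ind} gives
\begin{align*}
  \E\Big[L(\theta_j)^p\max_{i\le j-1}\big|u(t_i)-U^i\big|^p\Big]
  = \frac{1}{h}\int_{t_{j-1}}^{t_j}L(s)^p\diff{s}\cdot
    \Big\|\max_{i\le j-1}\big|u(t_i)-U^i\big|\Big\|_{L^p(\Omega;\R)}^p,
\end{align*}
which requires only the $p$-th moment of $L(\theta_j)$, not an exponential moment. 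The Gronwall recursion then runs on the \emph{deterministic} sequence $n\mapsto\|\max_{i\le n}|u(t_i)-U^i|\|_{L^p}^p$ with deterministic weights summing to $\|L\|_{L^p}^p<\infty$, yielding the finite constant $\exp\big((2T)^{p-1}\|L\|_{L^p}^p\big)$.
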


\begin{proof}
  Let $h \in (0,1)$ and $n \in \{1,\ldots, N_h\}$ be arbitrary. Since $U^0 =
  u_0$ and by using a telescopic sum argument as well as \eqref{eq4:sol} and
  \eqref{eq5:NumMeth1} and we get
  \begin{align*}
    u(t_n) - U^n 
    &= u(0) - U^0 + \sum_{j = 1}^{n} \big( u(t_j)-u(t_{j-1}) - (U^j-U^{j-1})
    \big)\\ 
    &= \sum_{j = 1}^n \Big( \int_{t_{j-1}}^{t_j} f(s, u(s)) \diff{s} - h
    f(t_{j-1} + \tau_j h, U^{j-1}) \Big).
  \end{align*}
  In order to simplify the notation we write $\theta_j := t_{j-1} + \tau_j h$.
  Note that the family of random variables $(\theta_j)_{j \in \N}$ is
  independent and $\theta_j$ is uniformly distributed on the interval
  $[t_{j-1}, t_j]$ for each $j \in \{1,\ldots,N_h\}$. Then, after adding and
  subtracting several terms we have to estimate the following three sums
  \begin{align}
    \label{eq4:err_rep}
    \begin{split}
      u(t_n) - U^n 
      &= \sum_{j = 1}^n \Big( \int_{t_{j-1}}^{t_j} f(s, u(s)) \diff{s} - h
      f(\theta_j, u(\theta_j) ) \Big) \\
      &\quad + h \sum_{j = 1}^n \Big( 
      f\big(\theta_j, u(\theta_j ) \big) - f \big( \theta_j, u( t_{j-1} ) \big) 
      \Big)\\
      &\quad + h \sum_{j = 1}^n \Big( 
      f\big(\theta_j, u( t_{j-1} )  \big) - f\big(\theta_j, U^{j-1}\big)    
      \Big)\\
      &=: S_1^n + S_2^n + S_3^n.
    \end{split}
  \end{align}
  First, we give an estimate of the term $S_3^n$. To this end we apply
  \eqref{eq4:Lip} and arrive at 
  \begin{align*}
    | S_3^n | &\le h \sum_{j = 1}^n \Big| 
    f\big(\theta_j, u( t_{j-1} )  \big) - f\big( \theta_j, U^{j-1} \big)
    \Big|\\
    &\le h \sum_{j = 1}^n L(\theta_j) \big| u( t_{j-1} ) - U^{j-1} \big|\\
    &\le h \sum_{j = 1}^n L(\theta_j) \max_{i \in \{0,1,\ldots,j-1\}}
    \big| u( t_{i} ) - U^{i} \big|.
  \end{align*}
  Observe that this inequality is only valid in the almost sure sense, since
  \eqref{eq4:Lip} holds for almost all $t \in [0,T]$. However, this is
  sufficient, since the expected value will eventually be applied.
  Therefore, after taking the Euclidean norm $| \cdot |$ and the maximum over
  the time levels in \eqref{eq4:err_rep} we obtain
  \begin{align*}
    \max_{i \in \{0,1,\ldots,n\}} \big| u(t_i) - U^i \big|
    &\le \max_{j \in \{1,\ldots,N_h\}} | S_1^j|
    + \max_{j \in \{1,\ldots,N_h\}} | S_2^j|\\
    &\quad + h \sum_{j = 1}^n L(\theta_j) \max_{i \in \{0,1,\ldots,j-1\}} 
    \big| u( t_{i} ) - U^{i} \big|
  \end{align*}
  almost surely for every $n \in \{1,\ldots,N_h\}$. 
  Next, we apply the $p$-th power of the
  $L^p(\Omega;\R)$-norm to both sides of the inequality. From the
  fact that $(a + b)^{p} \le 2^{p-1} ( a^p + b^p )$ for all $a,b \in
  [0,\infty)$ we then get 
  \begin{align}
    \label{eq4:gronwall}
    \begin{split}
      &\big\| \max_{i \in \{0,1,\ldots,n\}} \big| u(t_i) - U^i \big|
      \big\|_{L^p(\Omega;\R)}^p \\
      &\quad \le  2^{p - 1} \big\| \max_{n \in \{1,\ldots,N_h\}} |S_1^n|
      + \max_{n \in \{1,\ldots,N_h\}} |S_2^n| \big\|_{L^p(\Omega;\R)}^p\\
      &\qquad + 2^{p-1} \Big\|h \sum_{j = 1}^n L(\theta_j) \max_{i \in
      \{0,1,\ldots,j-1\}} \big| u( t_{i} ) - U^{i} \big|
      \Big\|_{L^p(\Omega;\R)}^p. 
    \end{split}
  \end{align}
  The last term is further estimated by H\"older's inequality as follows
  \begin{align*}
    &\Big\|h \sum_{j = 1}^n 1 \cdot L(\theta_j) \max_{i \in
    \{0,1,\ldots,j-1\}} \big| u( t_{i} ) - U^{i} \big|
    \Big\|_{L^p(\Omega;\R)}^p\\ 
    &\quad \le h^p n^{p-1} \sum_{j = 1}^n \big\| L(\theta_j) \max_{i \in
    \{0,1,\ldots,j-1\}} \big| u( t_{i} ) - U^{i} \big|
    \big\|_{L^p(\Omega;\R)}^p.
  \end{align*}
  For the next step, first take note of
  \begin{align}
    \label{eq4:Lp}
    \E \big[ L(\theta_j)^p \big] = \frac{1}{h} \int_{t_{j-1}}^{t_{j}} L(s)^p
    \diff{s},
  \end{align} 
  since $\theta_j \sim \mathcal{U}([t_{j-1},t_j])$. Moreover, we observe that
  $\theta_j$, and therefore also $L(\theta_j)$, is independent of the errors at
  earlier time levels. Thus, from \eqref{eq:prod_ind} we obtain 
  \begin{align*}
    &\big\| L(\theta_j) \max_{i \in \{0,1,\ldots,j-1\}}
    \big| u( t_{i} ) - U^{i} \big|  \big\|_{L^p(\Omega;\R)}^p\\
    &\quad = \E \big[ L(\theta_j)^p \big] 
    \E \big[ \max_{i \in \{0,1,\ldots,j-1\}}
    \big| u( t_{i} ) - U^{i} \big|^p \big]\\
    &\quad = \frac{1}{h} \int_{t_{j-1}}^{t_j} L(s)^p \diff{s} 
    \big\|  \max_{i \in \{0,1,\ldots,j-1\}} \big| u( t_{i} ) - U^{i} \big|
    \big\|_{L^p(\Omega;\R)}^p.
  \end{align*}  
  Inserting this into \eqref{eq4:gronwall} yields
  \begin{align*}
     &\big\| \max_{i \in \{0,1,\ldots,n\}} \big| u(t_i) - U^i \big|
     \big\|_{L^p(\Omega;\R)}^p \\
     &\quad \le  2^{p - 1} \big\| \max_{n \in \{1,\ldots,N_h\}} |S_1^n|
     + \max_{n \in \{1,\ldots,N_h\}} |S_2^n| \big\|_{L^p(\Omega;\R)}^p\\
     &\qquad + 2^{p-1} T^{p-1} \sum_{j = 1}^n
     \int_{t_{j-1}}^{t_j} L(s)^p \diff{s} 
     \big\|  \max_{i \in \{0,1,\ldots,j-1\}} \big| u( t_{i} ) - U^{i} \big|
     \big\|_{L^p(\Omega;\R)}^p.
  \end{align*}
  Therefore, an application of Lemma~\ref{lem:Gronwall} 
  results in
  \begin{align*}
    &\big\| \max_{i \in \{0,1,\ldots,n\}} \big| u(t_i) - U^i \big|
    \big\|_{L^p(\Omega;\R)}^p\\
    &\quad \le 2^{p - 1} \big\| \max_{n \in \{1,\ldots,N_h\}} |S_1^n|
     + \max_{n \in \{1,\ldots,N_h\}} |S_2^n| \big\|_{L^p(\Omega;\R)}^p
    \exp\Big( (2T)^{p-1} \| L \|_{L^p([0,T];\R)}^p \Big).
  \end{align*}
  It remains to give estimates for the terms $S_1^n$ and $S_2^n$ with
  respect to the $L^p(\Omega;\R^d)$-norm. For this we observe that the sum
  $S_1^n$ is the error of a randomized Riemann sum approximation. Since by
  \eqref{eq4:growth2}  
  \begin{align*}
    \| f( \cdot, u(\cdot) ) \|_{L^p([0,T];\R^d)}
    &\le \Big( \int_0^T \overline{K}^p(s) ( 1 + |u(s)| )^p \diff{s}
    \Big)^{\frac{1}{p}}\\
    &\le \Big( 1 + \sup_{t \in [0,T]} |u(t)| \Big) \big\|
    \overline{K} \big\|_{L^p([0,T];\R)} < \infty,
  \end{align*}
  Theorem~\ref{th4:randRiemann} is applicable and we deduce from
  \eqref{eq4:errRie1} that
  \begin{align*}
    \big\| \max_{n \in \{1,\ldots,N_h\}} | S_1^n|
    \big\|_{L^p(\Omega;\R^d)} \le 2 C_p T^{\frac{p-2}{2p}} 
    \Big( 1 + \sup_{t \in [0,T]} |u(t)| \Big) \big\|
    \overline{K} \big\|_{L^p([0,T];\R)} h^{\frac{1}{2}}.
  \end{align*}
  Regarding the estimate of $S_2^n$ we make use of \eqref{eq4:Lip} and the
  $(1 - \frac{1}{p})$-H\"older continuity of $u$ from \eqref{eq4:cont}. Then we
  obtain 
  \begin{align}
    \label{eq4:estS2}
    \begin{split}
      \max_{n \in \{1,\ldots,N_h\}} |S_2^n| 
      &\le h \sum_{j = 1}^{N_h} \Big| f\big(\theta_j, u(\theta_j ) \big) 
      - f\big(\theta_j, u( t_{j-1} )  \big) \Big| \\
      &\le h \sum_{j = 1}^{N_h} L(\theta_j) \big| u(\theta_j ) - u( t_{j-1} )
      \big| \\
      &\le \big\| \overline{K} \big\|_{L^p([0,T];\R)} \Big( 1 + \sup_{z \in
      [0,T]} | u(z) | \Big) h^{2 - \frac{1}{p}} \sum_{j = 1}^{N_h} L(\theta_j),
    \end{split}
  \end{align}
  where, as already noted above, this inequality is only valid in the almost
  sure sense. Next, by an application of H\"older's inequality it holds true that
  \begin{align*}
    h \sum_{j = 1}^{N_h} L(\theta_j) \le T^{1 - \frac{1}{p}} \Big( h \sum_{j =
    1}^{N_h} L(\theta_j)^p \Big)^{\frac{1}{p}}.
  \end{align*}
  Together with \eqref{eq4:Lp} we conclude from \eqref{eq4:estS2} that
  \begin{align*}
    &\big\| \max_{n \in \{1,\ldots,N_h\}} |S_2^n| \big\|_{L^p(\Omega;\R)}\\
    &\quad \le T^{1 - \frac{1}{p}} \big\| \overline{K} \big\|_{L^p([0,T];\R)}
    \| L \|_{L^p([0,T];\R)} \Big( 1 + \sup_{z \in [0,T]} | u(z) | \Big)  
    h^{1 - \frac{1}{p}}.
  \end{align*}
  This completes the proof.
\end{proof}

\begin{remark}
  Let us mention, that the proof of Theorem~\ref{th4:error1} admits 
  an explicit expression of the error constant $C$, namely
  \begin{align*}
    C &= 2^{1 - \frac{1}{p}} T^{\frac{1}{2} - \frac{1}{p}} \big\| \overline{K}
    \big\|_{L^p([0,T];\R)} 
    \exp\Big( \frac{1}{p} (2T)^{p-1} \| L \|_{L^p([0,T];\R)}^p \Big)\\
    &\quad \times \Big( 2 C_p 
    + T^{\frac{1}{2}} \| L \|_{L^p([0,T];\R)} \Big) \Big( 1 + \sup_{z \in [0,T]}
    | u(z) | \Big).  
  \end{align*}
  One could further estimate the supremum of $u$ by \eqref{eq4:apriori}.
  
  We observe that the error constant $C$ grows at least exponentially with $T$
  and $\|L\|_{L^p([0,T];\R)}$. This indicates that the numerical method
  requires very small values for the step size $h$ if applied to initial value
  problems on large time intervals $T \gg 1$ or with huge Lipschitz bounds
  $\|L\|_{L^p([0,T];\R)} \gg 1$.
\end{remark}

In the same way as in Theorem~\ref{th3:Rie_pathwise} we also have a result on
the almost sure convergence of the randomized Euler method \eqref{eq5:NumMeth1}.
Compare also with \cite[Theorem~2]{jentzen2009}, if the coefficient function
$f$ is additionally assumed to be locally bounded.

\begin{theorem}[Almost sure convergence]
  \label{th4:error2}
  Let Assumption~\ref{as4:f} be fulfilled with $p \in (2,\infty)$ and let $u$
  denote the exact solution to \eqref{eq4:CDE}. For a given sequence 
  $(h_m)_{m \in \N} \subset (0,1)$ of step sizes with $\sum_{m = 1}^\infty h_m
  < \infty$ let $(U^j_{m})_{j \in \{0,1,\ldots,N_{h_m}\}}$ denote the
  numerical approximation determined by \eqref{eq5:NumMeth1} with initial
  condition $U^0_{m} = u_0$ and step size $h_m$, $m \in \N$.
  Then, there exist a random variable $m_0 \colon \Omega \to \N_0$ and a
  measurable set $A \in \F$ with $\P(A) = 1$ such that for every $\omega \in A$
  and $m \ge m_0(\omega)$ it holds true that
  \begin{align*}
    \max_{n \in \{0,1,\ldots,N_{h_m}\}} | u(t_n) - U^n_m(\omega)|
    \le h_m^{\frac{1}{2} - \frac{1}{p}}.
  \end{align*}
\end{theorem}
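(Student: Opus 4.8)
The plan is to treat this statement as a direct corollary of the $L^p$-estimate already established in Theorem~\ref{th4:error1}, combined with the pathwise conversion provided by Lemma~\ref{lem3:pathwise}. This mirrors exactly the way Theorem~\ref{th3:Rie_pathwise} was deduced from Theorem~\ref{th4:randRiemann}: once one has an $L^p$-bound of order $h^{1/2}$ that is uniform in the step size, it can be upgraded to an almost sure statement of order $h^{1/2 - 1/p}$ via Chebyshev's inequality and the Borel--Cantelli lemma, and this upgrade is precisely what Lemma~\ref{lem3:pathwise} packages for us.

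Concretely, I would introduce the sequence of maximal errors
\[
  X_m := \max_{n \in \{0,1,\ldots,N_{h_m}\}} \big| u(t_n) - U^n_m \big|, \quad m \in \N.
\]
Since Assumption~\ref{as4:f} is in force with $p \in (2,\infty)$, Theorem~\ref{th4:error1} applies for each fixed step size $h_m$ and yields a constant $C \in (0,\infty)$, crucially independent of $h_m$, such that $\| X_m \|_{L^p(\Omega;\R)} \le C h_m^{1/2}$. In particular $X_m \in L^p(\Omega;\R)$ for every $m \in \N$. The only point deserving attention is the uniformity of $C$ in the step size; this is exactly what the explicit expression for the error constant in the remark following Theorem~\ref{th4:error1} guarantees, so no additional verification is needed.

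It then remains to check the hypotheses of Lemma~\ref{lem3:pathwise} with the exponent $\rho = \tfrac{1}{2}$. The summability $\sum_{m=1}^\infty h_m < \infty$ is assumed, and the required uniform bound
\[
  \sup_{m \in \N} h_m^{-1/2} \| X_m \|_{L^p(\Omega;\R)} \le C < \infty
\]
follows immediately from the estimate above. Moreover, since $p > 2$ we have $\rho = \tfrac{1}{2} > \tfrac{1}{p}$, so the condition $\rho \in (\tfrac{1}{p}, \infty)$ is satisfied. Lemma~\ref{lem3:pathwise} then produces a measurable set $A \in \F$ with $\P(A) = 1$ and a nonnegative random variable $m_0 \colon \Omega \to \N_0$ such that $|X_m(\omega)| \le h_m^{1/2 - 1/p}$ for all $\omega \in A$ and $m \ge m_0(\omega)$, which is exactly the asserted bound. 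The argument is thus entirely a consequence of the earlier results, and I do not anticipate any genuine obstacle here: the substantive analytic work — namely the randomized Riemann sum estimate and the discrete Gronwall argument controlling the propagation of the error — was already carried out in the proof of Theorem~\ref{th4:error1}.
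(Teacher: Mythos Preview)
Your proposal is correct and follows exactly the approach the paper intends: the paper omits the proof, stating only that it ``follows from the same steps as the proof of the first part of Theorem~\ref{th3:Rie_pathwise}'', which is precisely the route you describe --- define $X_m$ as the maximal error, invoke Theorem~\ref{th4:error1} for the uniform $L^p$-bound of order $h_m^{1/2}$, and apply Lemma~\ref{lem3:pathwise} with $\rho = \tfrac{1}{2}$.
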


Since the proof of Theorem~\ref{th4:error2} follows from the same steps as the 
proof of the first part of Theorem~\ref{th3:Rie_pathwise}, it is omitted.

\section{Randomized Runge-Kutta methods for ODEs}
\label{sec:ODE}

In this section, we consider initial value problems \eqref{eq4:CDE} whose
coefficient function $f$ enjoys slightly more regularity with respect to the
temporal variable $t$ than those considered in Section~\ref{sec:CDE}. However,
we still do not assume any differentiability of $f$.

\begin{assumption}
  \label{as5:f}
  The coefficient function $f \colon [0,T] \times \R^d \to
  \R^d$ is assumed to be continuous. Further, there exists $L \in (0,\infty)$
  such that  
  \begin{align}
    \label{eq5:Lip}
    | f(t,x_1) - f(t, x_2) | &\le L | x_1 - x_2|
  \end{align}
  for all $t \in [0,T]$ and $x_1, x_2 \in \R^d$. In addition, there
  exist $K \in (0,\infty)$ and $\gamma \in (0,1]$ with
  \begin{align}
    \label{eq5:Hoelder}
    | f(t_1, x) - f(t_2, x) | &\le K \big( 1 + |x| \big) | t_1 - t_2|^\gamma 
  \end{align}
  for all $t_1,t_2 \in [0,T]$ and $x \in \R^d$.
\end{assumption}

As a direct consequence of Assumption~\ref{as5:f} we take note of the linear
growth bound
\begin{align*}
  |f(t,x)| \le \overline{K} \big(1 + |x| \big), \quad \text{ for all } t \in
  [0,T], \, x \in \R^d,   
\end{align*}
with $\overline{K} := \max( L, KT^{\gamma} + |f(0,0)|)$.
Clearly, under Assumption~\ref{as5:f} the initial value problem \eqref{eq4:CDE}
is a classical ordinary differential equation. Therefore, there exists a
(global) unique solution $u \colon [0,T] \to \R^d$. In particular, the solution
$u$ is continuously differentiable with
\begin{align}
  \label{eq5:apriori}
  | u(t) |
  &\le \big( |u_0| + \overline{K} T \big) \exp\big( \overline{K} t \big)  
\end{align}
for all $t \in [0,T]$, and
\begin{align}
  \label{eq5:cont}
  | u(t) - u(s) | &\le \overline{K} \Big( 1 + \sup_{z \in [0,T]} |u(z)| \Big)
  |t - s|
\end{align}
for all $t,s \in [0,T]$.

The following theorem contains the error estimates for the randomized Euler
method \eqref{eq5:NumMeth1} and the randomized Runge-Kutta method
\eqref{eq5:NumMeth2} under Assumption~\ref{as5:f}. We provide explicit
expressions for the error constants further below.

\begin{theorem}[$L^p$-error estimate]
  \label{th5:error1}
  Let Assumption~\ref{as5:f} be fulfilled with $\gamma \in (0,1]$. Let $u$
  be the exact solution to \eqref{eq4:CDE}. For given step size $h \in (0,1)$
  we denote by $(U^j)_{j \in \{0,1,\ldots,N_h\}}$ and $(V^j)_{j
  \in \{0,1,\ldots,N_h\}}$ the sequences generated by the numerical methods 
  \eqref{eq5:NumMeth1} and \eqref{eq5:NumMeth2}, respectively. Then, for every
  $p \in [2,\infty)$ there exists a constant $C_U \in (0,\infty)$, independent
  of $h \in (0,1)$, such that 
  \begin{align}
    \label{eq5:error1}
    \big\| \max_{n \in \{0,1,\ldots,N_h\}} | u(t_n) - U^n|
    \big\|_{L^p(\Omega;\R)} \le C_U h^{\min(\frac{1}{2}+\gamma, 1)}.
  \end{align}
  Moreover, for every $p \in [2,\infty)$ there exists a constant $C_V \in
  (0,\infty)$, independent of $h \in (0,1)$, such that 
  \begin{align}
    \label{eq5:error2}
    \big\| \max_{n \in \{0,1,\ldots,N_h\}} | u(t_n) - V^n|
    \big\|_{L^p(\Omega;\R)} \le C_V h^{\frac{1}{2}+\gamma}.
  \end{align}
\end{theorem}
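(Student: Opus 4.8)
The plan is to treat the two schemes separately, reusing the structure of the proof of Theorem~\ref{th4:error1}. For the randomized Euler method \eqref{eq5:NumMeth1} I would keep the telescoping decomposition $u(t_n) - U^n = S_1^n + S_2^n + S_3^n$ from \eqref{eq4:err_rep}, with $\theta_j := t_{j-1} + \tau_j h$, and only exploit the stronger regularity granted by Assumption~\ref{as5:f}. The term $S_1^n$ is again the randomized Riemann sum error $\int_0^{t_n} g(s)\diff{s} - Q_{\tau,h}^n[g]$ for $g := f(\cdot, u(\cdot))$. This $g$ is now $\gamma$-H\"older continuous: combining \eqref{eq5:Lip}, \eqref{eq5:Hoelder}, the boundedness of $u$ from \eqref{eq5:apriori}, the Lipschitz bound \eqref{eq5:cont}, and $|t-s| \le T^{1-\gamma}|t-s|^\gamma$ yields a finite bound on $\|g\|_{\mathcal{C}^\gamma([0,T])}$, so \eqref{eq4:errRie2} gives $\|\max_n |S_1^n|\|_{L^p(\Omega;\R)} \le C h^{\frac12+\gamma}$. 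Since $u$ is now Lipschitz and $L$ is constant, the term $S_2^n = h\sum_{j=1}^n(f(\theta_j,u(\theta_j)) - f(\theta_j,u(t_{j-1})))$ is bounded pathwise by $hL\sum_j |u(\theta_j) - u(t_{j-1})| \le CTh$ using \eqref{eq5:cont}, hence with order $1$. The recursive remainder $S_3^n$ is absorbed by the discrete Gronwall inequality (Lemma~\ref{lem:Gronwall}) exactly as before, and in fact more easily, since the constant Lipschitz coefficient makes the conditioning/independence step of Theorem~\ref{th4:error1} unnecessary. Taking the slower of the two orders $\frac12+\gamma$ and $1$ yields the exponent $\min(\frac12+\gamma,1)$ of \eqref{eq5:error1}.

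For the randomized Runge-Kutta method \eqref{eq5:NumMeth2} I would start from the telescoping identity
\[
  u(t_n) - V^n = \sum_{j=1}^n \Big( \int_{t_{j-1}}^{t_j} f(s,u(s))\diff{s} - h\,f(\theta_j, V^j_\tau) \Big)
\]
and split it as $u(t_n) - V^n = S_1^n + R^n$, where $S_1^n$ is the same randomized Riemann sum error as above (hence of order $\frac12+\gamma$) and $R^n := h\sum_{j=1}^n (f(\theta_j, u(\theta_j)) - f(\theta_j, V^j_\tau))$. The decisive step is to show that the internal stage $V^j_\tau$ approximates $u(\theta_j)$ with a \emph{local} error of order $h^{1+\gamma}$ rather than merely $h$. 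Subtracting $V^j_\tau = V^{j-1} + \tau_j h\,f(t_{j-1},V^{j-1})$ from $u(\theta_j) = u(t_{j-1}) + \int_{t_{j-1}}^{\theta_j} f(s,u(s))\diff{s}$ and using $\theta_j - t_{j-1} = \tau_j h$ gives
\[
  u(\theta_j) - V^j_\tau = \big(u(t_{j-1}) - V^{j-1}\big) + \int_{t_{j-1}}^{\theta_j} \big( f(s,u(s)) - f(t_{j-1}, V^{j-1}) \big)\diff{s}.
\]
Inserting $f(t_{j-1}, u(t_{j-1}))$ into the integrand and applying \eqref{eq5:Hoelder}, \eqref{eq5:Lip} and \eqref{eq5:cont} then produces the pathwise local estimate $|u(\theta_j) - V^j_\tau| \le (1+Lh)\,|u(t_{j-1}) - V^{j-1}| + C h^{1+\gamma}$, where the $h^{1+\gamma}$ contribution comes precisely from the $\gamma$-H\"older regularity of $f$ in time.

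From \eqref{eq5:Lip} applied to $R^n$ together with this local estimate I obtain, almost surely, $|R^n| \le hL(1+Lh)\sum_{j=1}^n |u(t_{j-1}) - V^{j-1}| + C T h^{1+\gamma}$. Taking the maximum over the time levels, then the $L^p(\Omega;\R)$-norm, and moving the norm inside the recursive sum by the triangle inequality, I arrive at a discrete inequality $\phi_n \le a + c\sum_{j=1}^n \phi_{j-1}$ for $\phi_n := \|\max_{i\le n} |u(t_i) - V^i|\|_{L^p(\Omega;\R)}$, with $a = C(h^{\frac12+\gamma} + h^{1+\gamma})$ and $c = hL(1+Lh)$; note $\phi_0 = 0$. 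Lemma~\ref{lem:Gronwall} then yields $\phi_{N_h} \le a\exp(cN_h) \le C h^{\frac12+\gamma}$, since $cN_h \le L(1+Lh)T$ is bounded uniformly in $h$ and $1+\gamma \ge \frac12+\gamma$; this is \eqref{eq5:error2}.

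I expect the main obstacle to be the local error analysis of the predictor stage $V^j_\tau$: it is exactly the gain from $h$ to $h^{1+\gamma}$ in the estimate for $|u(\theta_j) - V^j_\tau|$---which hinges on combining the $\gamma$-H\"older-in-time regularity \eqref{eq5:Hoelder} of $f$ with the Lipschitz regularity \eqref{eq5:cont} of $u$---that lets \eqref{eq5:NumMeth2} outperform \eqref{eq5:NumMeth1} when $\gamma \in (\frac12,1]$. A minor secondary point is the bookkeeping of the maximum over time levels together with the $L^p$-norm in the Gronwall step; here the constancy of $L$ is a genuine simplification, since it removes the need for the conditioning argument used in Theorem~\ref{th4:error1} and allows one to work directly with the triangle inequality.
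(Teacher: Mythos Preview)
Your proposal is correct and follows essentially the same route as the paper. For the Euler scheme the decomposition $S_1^n + S_2^n + S_3^n$, the use of \eqref{eq4:errRie2} for $S_1^n$, the order-$h$ bound on $S_2^n$ via \eqref{eq5:cont}, and the simplified Gronwall step (deterministic $L$) all match the paper exactly. For the Runge--Kutta scheme the paper inserts the ``exact predictor'' $u(t_{j-1}) + h\tau_j f(t_{j-1},u(t_{j-1}))$ as an intermediate term, producing a three-term split $S_4^n + S_5^n + S_6^n$, whereas you bundle the last two into your $R^n$ and bound the stage error $|u(\theta_j) - V^j_\tau|$ directly; the underlying estimates (the $h^{1+\gamma}$ local gain from the H\"older regularity of $s \mapsto f(s,u(s))$ and the $(1+Lh)$ propagation factor) are identical.
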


\begin{proof}
  Let $h \in (0,1)$ be an arbitrary step size. As in the proof of
  Theorem~\ref{th4:error1} we write $\theta_j := t_{j-1} + \tau_j h$ for every
  $j \in \{1,\ldots,N_h\}$. 
  
  We first prove the error estimate \eqref{eq5:error1}
  for the randomized Euler method \eqref{eq5:NumMeth1}. 
  For this let $n \in \{1,\ldots, N_h\}$ be arbitrary.
  As in the proof of Theorem~\ref{th4:error1} in \eqref{eq4:err_rep} we 
  split the error into three sums of the form
  \begin{align}
    \label{eq5:err_rep}
    \begin{split}
      u(t_n) - U^n 
      &= \sum_{j = 1}^n \Big( \int_{t_{j-1}}^{t_j} f(s, u(s)) \diff{s} - h
      f(\theta_j, u(\theta_j) ) \Big) \\
      &\quad + h \sum_{j = 1}^n \Big( 
      f\big(\theta_j, u(\theta_j ) \big) 
      - f\big(\theta_j, u( t_{j-1} ) \big) \Big)\\
      &\quad + h \sum_{j = 1}^n \Big( 
      f\big(\theta_j, u( t_{j-1} ) \big) - f(\theta_j, U^{j-1})    
      \Big)\\
      &=: S_1^n + S_2^n + S_3^n.
    \end{split}
  \end{align}
  Due to \eqref{eq5:Lip} we can estimate the term $S_3^n$ by
  \begin{align*}
    |S_3^n| &\le h \sum_{j = 1}^n \Big| f\big(\theta_j, u( t_{j-1} ) \big) -
    f(\theta_j, U^{j-1}) \Big|\\
    &\le L h \sum_{j = 1}^n \max_{i \in \{0,1,\ldots, j-1\}} \big| u( t_{i} )
    - U^{i} \big|. 
  \end{align*}
  Thus, applying the Euclidean norm and then taking the maximum over all
  time steps $n$ in \eqref{eq5:err_rep} yields
  \begin{align*}
    \max_{i \in \{0,1,\ldots,n\}} \big| u(t_i) - U^i \big|
    &\le \max_{j \in \{1,\ldots,N_h\}} |S_1^j| 
    + \max_{j \in \{1,\ldots,N_h\}} |S_2^j| \\
    &\quad + L h \sum_{j = 1}^{n} \max_{i \in \{0,1,\ldots, j-1\}} \big| u(
    t_{i} ) - U^{i} \big|.  
  \end{align*}
  In contrast to the situation in Theorem~\ref{th4:error1} the Lipschitz
  constant $L$ is now deterministic. Thus, after applying the
  $L^p(\Omega;\R)$-norm to this inequality we obtain
  \begin{align*}
    \big\| \max_{i \in \{0,1,\ldots,n\}} \big| u(t_i) - U^i \big|
    \big\|_{L^p(\Omega;\R)}
    &\le \big\| \max_{j \in \{1,\ldots,N_h\}} |S_1^j| 
    + \max_{j \in \{1,\ldots,N_h\}} |S_2^j| \big\|_{L^p(\Omega;\R)}\\
    &\quad + L h \sum_{j = 1}^{n} \big\| \max_{i \in \{0,1,\ldots, j-1\}} \big|
    u( t_{i} ) - U^{i} \big| \big\|_{L^p(\Omega;\R)}.
  \end{align*}
  Then, an application of Gronwall's lemma (see Lemma~\ref{lem:Gronwall})
  yields
  \begin{align*}
    &\big\| \max_{i \in \{0,1,\ldots,N_h\}} \big| u(t_i) - U^i \big|
    \big\|_{L^p(\Omega;\R)} \\
    &\quad \le \big\| \max_{j \in \{1,\ldots,N_h\}} |S_1^j| 
    + \max_{j \in \{1,\ldots,N_h\}} |S_2^j| \big\|_{L^p(\Omega;\R)}
    \exp\big( L T \big)
  \end{align*}
  and it remains to estimate the norms of the sums $S_1^n$ and $S_2^n$.

  Regarding the term $S_1^n$ it follows from \eqref{eq5:Lip}
  and \eqref{eq5:Hoelder} that
  \begin{align}
    \label{eq5:Hoelder_f}
    \begin{split}
      \big| f(s,u(s)) - f(t,u(t)) \big| &\le \big| f(s,u(s)) - f(s,u(t)) \big|
      + \big| f(s,u(t)) - f(t,u(t)) \big| \\
      &\le L \big| u(s) - u(t) \big| + K \Big( 1 + \sup_{z \in [0,T]} |u(z)|
      \Big) |t - s|^\gamma  
    \end{split}
  \end{align}
  for all $t,s \in [0,T]$. Hence, due to \eqref{eq5:cont}
  we see that the mapping $[0,T] \ni t \mapsto f(t,u(t)) \in \R^d$ is 
  $\gamma$-H\"older continuous. In particular, 
  \begin{align*}
    \big\| f(\cdot, u(\cdot)) \big\|_{\mathcal{C}^\gamma([0,T])} 
    \le \big(2 + L T^{1 - \gamma}\big)
    \overline{K} \Big( 1 + \sup_{z \in [0,T]} |u(z)| \Big). 
  \end{align*}
  Therefore, we can apply the estimate \eqref{eq4:errRie2} from
  Theorem~\ref{th4:randRiemann} to $S_1^n$. This gives
  \begin{align*}
    \big\| \max_{j \in \{1,\ldots,N_h\}} |S_1^j| \big\|_{L^p(\Omega;\R)}
    \le C_p \sqrt{T} \big\| f(\cdot, u(\cdot))
    \big\|_{\mathcal{C}^\gamma([0,T])} h^{\frac{1}{2} + \gamma}.
  \end{align*}
  Finally, the estimate of $S_2^n$ follows the same lines as in
  \eqref{eq4:estS2} but we additionally make use of the Lipschitz continuity
  \eqref{eq5:cont} of $u$. Then we get
  \begin{align*}
    \max_{j \in \{1,\ldots,N_h\}} |S_2^j| 
    &\le L h \sum_{j = 1}^{N_h} \big| u(\theta_j ) -  u( t_{j-1} ) \big|
    \le L \overline{K} T \Big( 1 + \sup_{z \in [0,T]} |u(z)| \Big) h.
  \end{align*}
  This completes the proof of \eqref{eq5:error1}.

  Let us now turn to the proof of the error estimate \eqref{eq5:error2} for the
  randomized Runge-Kutta method \eqref{eq5:NumMeth2}. This time
  we apply a slightly modified version of
  \eqref{eq5:err_rep}: 
  \begin{align}
    \label{eq5:err_rep2}
    \begin{split}
      u(t_n) - V^n 
      &= \sum_{j = 1}^n \Big( \int_{t_{j-1}}^{t_j} f(s, u(s)) \diff{s} - h
      f(\theta_j, u(\theta_j) ) \Big) \\
      &\quad + h \sum_{j = 1}^n \Big( 
      f\big(\theta_j, u(\theta_j ) \big) 
      - f\big(\theta_j, u( t_{j-1} ) + h\tau_j f(t_{j-1},u(t_{j-1})) \big)
      \Big)\\ 
      &\quad + h \sum_{j = 1}^n \Big( 
      f\big(\theta_j, u( t_{j-1} ) + h\tau_j f(t_{j-1},u(t_{j-1})) \big) 
      - f(\theta_j, V^{j-1}_\tau) \Big)\\
      &=: S_4^n + S_5^n + S_6^n.
    \end{split}
  \end{align}
  Note that actually $S_4^n = S_1^n$ for all $n \in \{1,\ldots,N_h\}$. Thus we
  directly obtain
  \begin{align*}
    \big\| \max_{j \in \{1,\ldots,N_h\}} |S_4^j| \big\|_{L^p(\Omega;\R)}
    \le C_p \sqrt{T} \big\| f(\cdot, u(\cdot))
    \big\|_{\mathcal{C}^\gamma([0,T])} h^{\frac{1}{2} + \gamma}.    
  \end{align*}
  Moreover, due to \eqref{eq5:Hoelder_f} the estimate of $S_5^n$ reads as
  follows 
  \begin{align*}
    \max_{n \in \{1,\ldots,N_h\}} |S_5^n| 
    &\le L h \sum_{j = 1}^{N_h} \big| u(\theta_j ) 
    -  u( t_{j-1} ) - h\tau_j f(t_{j-1},u(t_{j-1})) \big|\\
    &\le L h \sum_{j = 1}^{N_h} \int_{t_{j-1}}^{t_{j-1} + h\tau_j} 
    \big| f(s,u(s)) - f(t_{j-1},u(t_{j-1})) \big| \diff{s} \\
    &\le L T \big\| f(\cdot, u(\cdot)) \big\|_{\mathcal{C}^\gamma([0,T])}
    h^{1 + \gamma}.
  \end{align*}
  For the last step recall the definition of $V_\tau^n$ from
  \eqref{eq5:NumMeth2}. Thus, by using \eqref{eq5:Lip} we get
  \begin{align*}
    &\Big| f\big(\theta_j, u( t_{j-1} ) + h\tau_j f(t_{j-1},u(t_{j-1})) \big) 
      - f(\theta_j, V^{j-1}_\tau) \Big|\\
    &\quad \le L \big| u( t_{j-1} ) + h\tau_j f(t_{j-1},u(t_{j-1})) - 
    \big( V^{j-1} + h\tau_j f(t_{j-1}, V^{j-1} ) \big) \big|\\
    &\quad \le L (1 + h L) \big| u( t_{j-1} ) - V^{j-1} \big|.
  \end{align*}
  Consequently, since $h \in (0,1)$ we have
  \begin{align*}
    | S_6^n | \le L ( 1 + L) h  \sum_{j = 1}^n \max_{i \in \{1,\ldots,j-1\}}
    \big|  u( t_{i} ) - V^{i} \big|
  \end{align*}
  for every $n \in \{1,\ldots,N_h\}$. Then, 
  the error estimate \eqref{eq5:error2} follows from a further application 
  of Lemma~\ref{lem:Gronwall} as demonstrated above.
\end{proof}

\begin{remark}
  As in the previous section, the 
  proof of Theorem~\ref{th5:error1} also admits an explicit expressions of
  the error constants $C_U$ and $C_V$, namely
  \begin{align*}
    C_U &= \exp(LT)  \overline{K} \sqrt{T}
    \Big( C_p \big(2 + L T^{1 - \gamma}\big) +  L \sqrt{T} \Big)
    \Big( 1 + \sup_{z \in [0,T]} |u(z)| \Big)
  \end{align*}
  and
  \begin{align*}
    C_V &= \exp\big( L (1+L) T \big) \overline{K} \big( C_p \sqrt{T} + L T \big)
    \big(2 + L T^{1 - \gamma}\big) \Big( 1 + \sup_{z \in [0,T]}
    |u(z)| \Big),
  \end{align*}
  where the supremum of $u$ can be further estimated by \eqref{eq5:apriori}.
    
  Again, we take note of the fact that the error constants $C_U$ and $C_V$ both
  grow at least exponentially with the final time $T$ and the Lipschitz
  constant $L$. Both methods are therefore not necessarily well-suited for
  long-time simulations or if the ODE is stiff.
\end{remark}

We close this section with the following result on the almost sure convergence
of the randomized Euler method \eqref{eq5:NumMeth1} and the 
randomized Runge-Kutta method \eqref{eq5:NumMeth2}.

\begin{theorem}[Almost sure convergence]
  \label{th5:error2}
  Let Assumption~\ref{as5:f} be fulfilled for some $\gamma \in (0,1]$ and let
  $u$ denote the exact solution to \eqref{eq4:CDE}. For a given sequence 
  $(h_m)_{m \in \N} \subset (0,1)$ of step sizes with $\sum_{m = 1}^\infty h_m
  < \infty$ let $(U^j_{m})_{j \in \{0,1,\ldots,N_{h_m}\}}$ 
  and $(V^j_{m})_{j \in \{0,1,\ldots,N_{h_m}\}}$
  denote the numerical approximations determined by \eqref{eq5:NumMeth1} and
  \eqref{eq5:NumMeth2} with initial condition $u_0$ and step size
  $h_m$, $m \in \N$, respectively.
  Then, for every $\epsilon \in (0,\frac{1}{2})$ there exist a random variable
  $m_U \colon \Omega \to \N_0$ and a measurable set $A_U \in \F$ with
  $\P(A_U) = 1$ such that for every $\omega \in A_U$
  and $m \ge m_U(\omega)$ we have
  \begin{align*}
    \max_{n \in \{0,1,\ldots,N_{h_m}\}} | u(t_n) - U^n_m(\omega)|
    \le h_m^{\min(1, \frac{1}{2} + \gamma) - \epsilon}.
  \end{align*}
  In addition, for every $\epsilon \in (0,\frac{1}{2})$ there exist a random
  variable $m_V \colon \Omega \to \N_0$ and a measurable set $A_V \in \F$ with
  $\P(A_V) = 1$ such that for every $\omega \in A_V$ and $m \ge m_U(\omega)$ we
  have
  \begin{align*}
    \max_{n \in \{0,1,\ldots,N_{h_m}\}} | u(t_n) - V^n_m(\omega)|
    \le h_m^{\frac{1}{2} + \gamma - \epsilon}.
  \end{align*}
\end{theorem}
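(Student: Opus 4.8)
The plan is to mimic exactly the reduction used in the proof of Theorem~\ref{th3:Rie_pathwise}: the two almost sure statements should follow mechanically from the $L^p$-bounds already established in Theorem~\ref{th5:error1} together with Lemma~\ref{lem3:pathwise}. For the randomized Euler method \eqref{eq5:NumMeth1} I would first fix $\epsilon \in (0,\frac{1}{2})$ and introduce the error sequence
\begin{align*}
  X_m := \max_{n \in \{0,1,\ldots,N_{h_m}\}} | u(t_n) - U^n_m |, \quad m \in \N.
\end{align*}
By \eqref{eq5:error1} we have, for each $p \in [2,\infty)$, the estimate $\| X_m \|_{L^p(\Omega;\R)} \le C_U h_m^{\min(\frac{1}{2}+\gamma,1)}$ with a constant $C_U$ that may depend on $p$ but is independent of $h_m$. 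Setting $\rho := \min(\frac{1}{2}+\gamma,1)$ this shows $\sup_{m \in \N} h_m^{-\rho} \| X_m \|_{L^p(\Omega;\R)} < \infty$, so the hypotheses of Lemma~\ref{lem3:pathwise} are satisfied.

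The decisive step is the choice of $p$ in dependence of $\epsilon$: I would pick $p \in (2,\infty)$ with $\frac{1}{p} < \epsilon$, which, since $\epsilon < \frac{1}{2} \le \rho$, simultaneously guarantees the admissibility condition $\rho > \frac{1}{p}$ of Lemma~\ref{lem3:pathwise}. The lemma then provides a random variable $m_U$ and a set $A_U \in \F$ with $\P(A_U) = 1$ such that $|X_m(\omega)| \le h_m^{\rho - \frac{1}{p}}$ for all $\omega \in A_U$ and $m \ge m_U(\omega)$. Because $h_m \in (0,1)$ and $\rho - \frac{1}{p} \ge \rho - \epsilon$, monotonicity of $h \mapsto h^a$ on $(0,1)$ yields $h_m^{\rho - \frac{1}{p}} \le h_m^{\min(\frac{1}{2}+\gamma,1) - \epsilon}$, which is the asserted rate for $U^n_m$.

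For the randomized Runge-Kutta method \eqref{eq5:NumMeth2} the argument is identical with $Y_m := \max_{n} | u(t_n) - V^n_m |$ and with \eqref{eq5:error2} used in place of \eqref{eq5:error1}; here $\rho = \frac{1}{2}+\gamma$ (no truncation at $1$ occurs, reflecting the faster convergence of the scheme), and the same choice $\frac{1}{p} < \epsilon$ delivers the bound $h_m^{\frac{1}{2}+\gamma-\epsilon}$. I do not anticipate any genuine obstacle here; the only point requiring care is the order of the quantifiers, in that $\epsilon$ must be fixed before $p$ is selected, and only afterwards does Lemma~\ref{lem3:pathwise} produce the $\omega$-dependent threshold and the full-measure set. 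In particular the $p$-dependence of the constants $C_U, C_V$ is harmless, since it is the exponent $\rho - \frac{1}{p}$, and not the constant, that controls the rate. As in the case of Theorem~\ref{th4:error2}, the resulting write-up is so short that it could reasonably be omitted.
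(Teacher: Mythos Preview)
Your proposal is correct and matches the paper's approach exactly: the paper omits the proof entirely, noting only that it is similar to the second part of Theorem~\ref{th3:Rie_pathwise}, which is precisely the reduction via Lemma~\ref{lem3:pathwise} (with $p$ chosen so that $\frac{1}{p}<\epsilon$) that you describe. Your observation that the argument is short enough to be omitted is therefore on point.
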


The proof of Theorem~\ref{th5:error2} is similar to the proof of the
second part of Theorem~\ref{th3:Rie_pathwise} and is therefore omitted.


\section{Numerical Examples}
\label{sec:examples}

In this section we illustrate our theoretical results through a few numerical
experiments.

\subsection{State-independent case with weak singularities}
\label{sec:example1}
Consider the following ODE with a state-independent coefficient function 
\begin{align}
  \label{eq:ODEexample1}
  \begin{split}
    \begin{cases}
      \dot{u}(t) &= (T-t)^{-1/\gamma}, \quad t \in [0,T],\\
      u(0) &= 0,
    \end{cases}
  \end{split}
\end{align}
with varying values for the parameter $\gamma$. In dependence of $\gamma$
we have different regularity of the coefficient function
$g(t):=(T-t)^{-1/\gamma}$ in terms of the $L^p$-spaces. It is not hard to see
that the exact solution at the final time $T$ is given by $
\frac{T}{1-1/\gamma}$. In the experiment, we take $\gamma$ to be $2,3,5,8$ and
$10$, $T=1$ and simulate the solutions via 
scheme \eqref{eq5:NumMeth1}, which in fact simplifies to the randomized Riemann
sum \eqref{eq3:RandRie}. We approximate the error of the quadrature rule with
respect to the $L^2$-norm at terminal time $T=1$ by a Monte Carlo simulation with $1000$
independent samples. The result is shown in Figure \ref{fig:1}.

\begin{figure}[h]
\centering
\caption{{\small $L^2$ convergence of the randomized Riemann sum to Eqn.
\eqref{eq:ODEexample1}}} 
\includegraphics[width=0.9\textwidth]{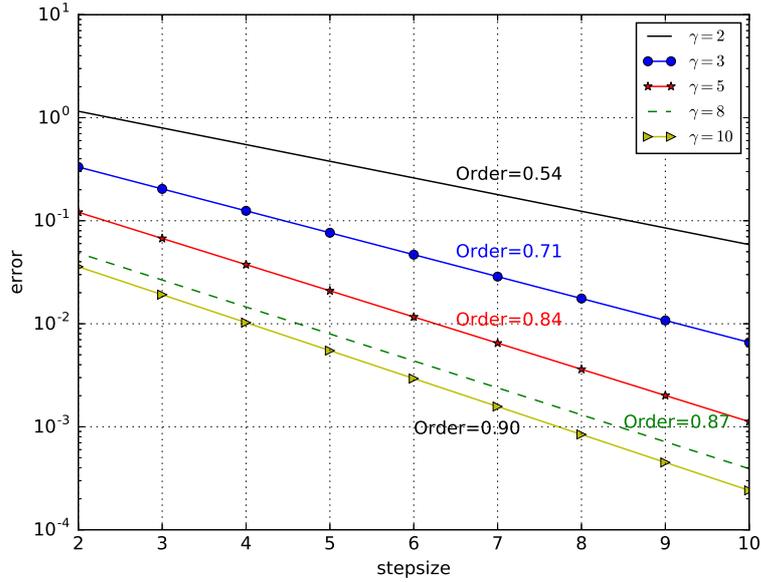}
\label{fig:1}
\end{figure}

According to Theorem \ref{th4:randRiemann}, the convergence order depends on
the integrability of the function $g$. In Figure \ref{fig:1}, the 
root-mean-squared errors were plotted versus the 2-logarithm of the underlying
step size, i.e., the number $n$ on the x-axis indicates the step size $h =
2^{-n}$. When $\gamma=0.5$, the observed order of convergence is as expected around
$\frac{1}{2}$, since $g$ is only $L^{2 - \epsilon}$ integrable. When increasing
the value for $\gamma$ from $2$ to $10$, the regularity of $g$ is 
raised, which in turn gives an increase in the observed order of convergence
from $0.54$ to $0.90$.  

\subsection{$L^2$ convergence for an ODE with jumps}
\label{sec:example3}
Consider the following ODE with a non-continuous coefficient function:
 \begin{align}
  \label{eq:ODEexample3}
  \begin{split}
    \begin{cases}
      \dot{u}(t) &= g(t)u, \quad t \in [0,T],\\
      u(0) &= 1,
    \end{cases}
  \end{split}
\end{align}
where  $g(t):=\left[- \frac{1}{10} \mbox{sgn}(\frac{1}{4}
T-t)-\frac{1}{5} \mbox{sgn}(\frac{1}{2} T-t)- \frac{7}{10}
\mbox{sgn}(\frac{3}{4}T-t)\right]$\ and 
 \begin{align}
  \mbox{sgn}(t):=
  \begin{split}
    \begin{cases}
      -1,& \ \mbox{if\ }t<0,\\
       0,& \ \mbox{if\ }t=0,\\      
       1,& \ \mbox{if\ }t>0.
    \end{cases}
  \end{split}
\end{align}
Here we have three jump points at $t=\frac{1}{4} T$, $t=\frac{1}{2} T$ and
$t=\frac{3}{4}T$. It is easy to see that the exact solution at terminal time
equals $\exp(-\frac{3}{10}T)$. We perform the numerical experiment  
with the classical Euler scheme, the randomized Euler scheme
\eqref{eq5:NumMeth1} and the randomized Runge-Kutta scheme
\eqref{eq5:NumMeth2}, respectively. A comparison of the $L^2$-errors at the
final time $T = 1$ is shown in Figure \ref{fig:3}, where the errors have been
approximated by a Monte Carlo simulation with $1000$ independent samples for
the same step sizes as in Section~\ref{sec:example1}.   

\begin{figure}[h]
  \centering
  \caption{{\small $L^2$-errors versus step sizes for Eqn. (\ref{eq:ODEexample3})}}
  \includegraphics[width=0.9\textwidth]{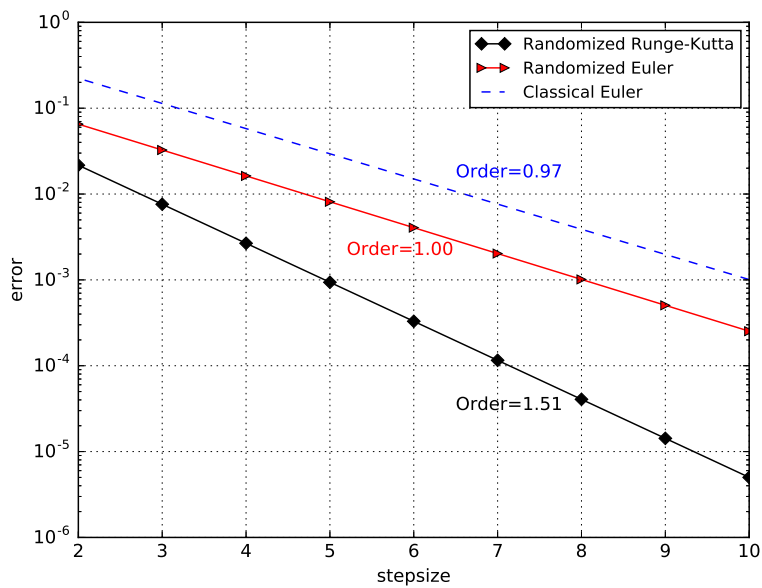}
  \label{fig:3}
\end{figure}

Note that by our choice of the step sizes the classical Euler scheme always
evaluates the mapping $g$ at the three jump points. But due to the definition
of the sign function, one of the summands in the definition of $g$ is always
equal to zero at the jump points, causing $g$ to be neither left continuous nor
right continuous at these points. For instance, we have $g(\frac{1}{2}) =
-\frac{3}{5}$, while $g(\frac{1}{2}+\epsilon) = -\frac{2}{5}$ and
$g(\frac{1}{2}-\epsilon) = -\frac{4}{5}$ for all $\epsilon \in
(0,\frac{1}{4})$. This causes an additional error of order $h$ in each step of
the classical Euler scheme, where a jump point of $g$ is involved.

On the other hand, this type of error is avoided by both randomized numerical
methods, since the random variable $\tau$ will prevent the evaluation of $g$ at
jump points almost surely. This explains why both randomized methods perform
better than the classical method if we compare the $L^2$-errors for the same
step sizes. Further, although the coefficient 
function $g$ is not continuous with respect to the time variable, we
observe an experimental convergence of order $1.51$ for the randomized
Runge-Kutta method \eqref{eq5:NumMeth2}. This is well in agreement with the
maximum order of convergence that has been proven for that method in
Theorem~\ref{th5:error1}. 

\begin{figure}[h]
  \centering
  \caption{CPU time versus $L^2$ errors}
  \includegraphics[width=0.9\textwidth]{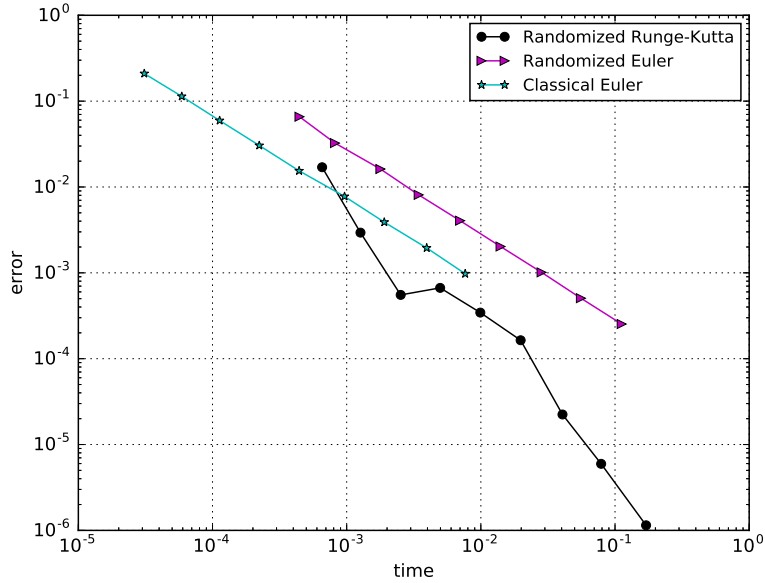}
  \label{fig:4}
\end{figure}

Next, let us briefly compare the computational efficiency of the three methods
under consideration. By also taking the necessity of drawing a random number at
each step into consideration, the two randomized Runge-Kutta methods
\eqref{eq5:NumMeth1} and \eqref{eq5:NumMeth2} are of course computationally
more expensive than the classical Euler method. For this reason we compare in
Figure~\ref{fig:4} 
the average CPU times of these three schemes versus their accuracy. From this
figure we can see that the classical Euler method is as expected the fastest
method and, since it still converges  with the same experimental order as the 
randomized Euler method \eqref{eq5:NumMeth1}, it is in total
more efficient than its randomized counter-part. On the other hand, the 
computationally even more expensive randomized Runge-Kutta method
\eqref{eq5:NumMeth2} quickly offsets its higher cost with its higher order of 
convergence.  

\section*{Acknowledgement}

The authors like to thank Wolf-J\"urgen Beyn, Monika Eisenmann, Mih\'aly
Kov\'acs, and Stig Larsson for inspiring discussions and helpful comments. This
research was carried out in the framework of \textsc{Matheon} supported by   
Einstein Foundation Berlin. The authors also gratefully acknowledge financial
support by the German Research Foundation through the research unit FOR 2402
-- Rough paths, stochastic partial differential equations and related topics --
at TU Berlin.

\end{document}